\documentclass[final,12pt]{article}

\usepackage[T2A]{fontenc}
\usepackage[cp1251]{inputenc}
\usepackage[english]{babel}
\usepackage{amsfonts,amsmath,amsthm,amscd,amssymb,latexsym,amsbsy,pb-diagram,cite,caption2}
\usepackage[dvips]{graphicx}
\usepackage[mathscr]{eucal}

\usepackage{showkeys}

\newtheorem{definition}{Definition}[section]
\newtheorem{theorem}[definition]{Theorem}
\newtheorem{lemma}[definition]{Lemma}
\newtheorem{proposition}[definition]{Proposition}
\newtheorem{corollary}[definition]{Corollary}

\theoremstyle{definition}
\newtheorem{remark}[definition]{Remark}
\newtheorem{example}[definition]{Example}

\numberwithin{equation}{section}

\topmargin = 0.46cm \textheight = 21.5cm \oddsidemargin = 0.46cm
\textwidth = 15.5cm \headheight = 0 cm \headsep = 0mm

\DeclareMathOperator{\diam}{diam}

 \sloppy

\begin{document}
\begin{center}
{\Large\bf Extended by Balk metrics}
\end{center}

\bigskip
\begin{center}
{\bf O. Dovgoshey and D. Dordovskyi}
\end{center}

\begin{abstract}
Let $X$ be a nonempty set and $\mathcal{F}(X)$ be the set of
nonempty finite subsets of $X$. The paper deals with the extended
metrics $\tau:\mathcal{F}(X)\to\mathbb{R}$ recently introduced by
Peter Balk. Balk's metrics and their restriction to the family of
sets $A$ with $|A|\leqslant n$ make possible to consider "distance
functions" with $n$ variables and related them quantities. In
particular, we study such type generalized diameters
$\diam_{\tau^n}$ and find conditions under which
$B\mapsto\diam_{\tau^n}B$ is a Balk's metric. We prove the necessary
and sufficient conditions under which the restriction $\tau$ to the
set of $A\in\mathcal{F}(X)$ with $|A|\leqslant 3$ is a symmetric
$G$-metric. An infinitesimal analog for extended by Balk metrics is
constructed.
\end{abstract}

\bigskip

{\bf Keywords: generalized diameter, $G$-metric, pretangent space,
ultrametric, ultrafilter}

\bigskip
{\bf MSC 2010:} 54E35.

\section{Introduction}

The following generalized metrics were introduced by P.~Balk in 2009
for applications to some inverse geophysical problems.

Let $X$ be a nonempty set and $\mathcal{F}(X)$ be the set of all
nonempty finite subsets of $X$.

\begin{definition}\cite{Ba}\label{def1.1}
A function $\tau:\mathcal{F}(X)\to\mathbb{R}$ is an extended (by
Balk) metric on $X$  if the equivalence
\begin{equation}\label{eq1.1}
(\tau(A)=0)\Leftrightarrow(|A|=1),
\end{equation}
and the equality
\begin{equation}\label{eq1.2}
\tau(A\cup B)\leqslant\tau(A\cup C)+\tau(C\cup B)
\end{equation}
hold for all $A,B,C\in\mathcal{F}(X)$.
\end{definition}

\noindent{\bf Example 1.1} \cite{Ba}. If $\rho$ is a metric on $X$,
then the function $\tau(A)=\diam_{\rho}(A)$, with
$\diam_{\rho}(A)=\sup\{\rho(x,y) : x,y\in A\}$, is an extended by
Balk metric.

If $\tau$ is an extended by Balk metric on $X$ then, as shown in
Proposition \ref{pr2.1}, the function $\tau^2:X^2\to\mathbb{R}$,
with
\begin{equation}\label{eq1.4}
\tau^2(x,y)=\tau(Im(x,y)),\quad
Im(x,y)=\begin{cases} \{x\}\qquad\text{ if } x=y\\
\{x,y\}\quad\text{ if } x\neq y,
\end{cases}
\end{equation}
is a metric on $X$. Analogously, for all integer numbers $k\geqslant
1$ we can define the functions $\tau^k:X^k\to\mathbb{R}$ as
\begin{equation}\label{eq1.5}
\tau^k(x_1,\ldots,x_k)=\tau(Im(x_1,\ldots,x_k)),
\end{equation}
where $Im(x_1,\ldots,x_k)$ is the image of the set  $\{1,\ldots,k\}$
under the map $i\mapsto x_i$,
\begin{equation}\label{eq1.6}
(x\in Im(x_1,\ldots,x_k))\Leftrightarrow(\exists\,
i\in\{1,\ldots,k\}:x = x_i).
\end{equation}
Formula \eqref{eq1.5} turns to formula \eqref{eq1.4} when $k=2$,
thus we obtain a "generalized metric" which is a function of $k$
variables (while the usual metric is a function of two variables).

In what follows the important role will play some "generalized
diameters" generated by $\tau^k$.

\begin{definition}\label{def1.2*}
Let $X\neq\varnothing$, $k$ be an integer positive number and let
$\tau:\mathcal{F}(X)\to\mathbb{R}$ be an extended by Balk metric.
For every nonempty $A\subseteq X$ we set
$$
\diam_{\tau^k}\,A=\sup\{\tau^k(x_1,\ldots,x_k)\,:\,x_1,\ldots,x_k\in
A\}
$$
that is equivalent to
\begin{equation}\label{n*}
\diam_{\tau^k}\,A=\sup\{\tau(B)\,:\,B\subseteq A,\,|B|\leqslant k\}.
\end{equation}
\end{definition}

\begin{remark}
It is clear that $\diam_{\tau^k}\,A$ is the usual diameter of $A$ if
$k=2$. Definition \ref{def1.1} implies $\diam_{\tau^1}\,A=0$ for
every $A\in\mathcal{F}(X)$.
\end{remark}

In Theorem \ref{t2.11} of the second section of the paper we obtain
a structural characteristic of extended by Balk metrics
$\tau:\mathcal{F}(X)\to\mathbb{R}$ for which $\tau(A) =
\diam_{\tau^k}A$ holds with all $A\in\mathcal{F}(X)$ and $k\geqslant
2$.

In the third section we study the relationship between $\tau^3$ and
the so-called $G$-metrics which were introduced by Zead Mustafa and
Brailey Sims in 2006.

\begin{definition}\cite{5*}\label{def1.3}
Let $X$ be a nonempty set. A function $G:X^3\to\mathbb{R}$ is called
a $G$-metric if the following properties hold.
\begin{itemize}
\item[$(i)$] $G(x,y,z)=0$ for $x=y=z$.
\item[$(ii)$] $0<G(x,x,y)$ for $x\neq y$.
\item[$(iii)$] $G(x,x,y)\leqslant G(x,y,z)$ for $z\neq y$.
\item[$(iv)$] $G(x_1,x_2,x_3)=G(x_{\sigma_1}, x_{\sigma_2},
x_{\sigma_3})$ for every permutation $\sigma$ of the set $\{1,2,3\}$
and every $(x_1,x_2,x_3)\in X^3$.
\item[$(v)$] $G(x,y,z)\leqslant G(x,a,a)+G(a,y,z)$ for all $a,x,y,z\in
X$.
\end{itemize}
\end{definition}

\begin{definition}\label{def1.4}
A $G$-metric is called symmetric if the equality $G(x,y,y)=G(y,x,x)$
holds for all $x,y\in X$.
\end{definition}

\begin{remark}\label{r1.4*}
In \cite{5*} $G$-metrics were defined as some functions $G$ with the
codomain $[0,\infty)$, which is slightly different from Definition
\ref{def1.3}. In this connection it should be pointed out that
conditions $(i)-(iv)$ of Definition \ref{def1.3} imply the
nonnegativity of $G$. Indeed, it is sufficient to prove $G(y,x,x)>0$
for $x\neq y$, that follows from $0<G(x,x,y)=G(x,y,x)=G(y,x,x)$.
\end{remark}

We shall prove that for every symmetric $G$-metric on $X$ there is
an increasing extended by Balk metric
$\tau:\mathcal{F}(X)\to\mathbb{R}$ such that $\tau^3=G$. Conversely,
an arbitrary $\tau^3$ is a $G$-metric if the corresponding extended
by Balk metric $\tau:\mathcal{F}(X)\to\mathbb{R}$ is increasing.
(See Theorem \ref{t2.9*}).

The infinitesimal structure of spaces $(X,\tau)$ with extended by
Balk metrics $\tau$ is investigated in the fourth section. In
particular, we transfer the extended by Balk metrics $\tau$ from $X$
to spaces which are pretangent to $(X, \tau^2)$. The pretangent
spaces to the general metric spaces were introduced in \cite{DM1}
(see also \cite{DM2}). For convenience, we recall some related
definitions.

Let $(X,d)$ be a metric space and let $p\in X$. Fix a sequence
$\tilde{r}$ of positive real numbers $r_n$ which tend to zero. The
sequence $\tilde{r}$ will be called a {\it normalizing sequence}.
Let us denote by $\tilde{X}_p$ the set of all sequences of points
from $X$ which tend to $p$.

\begin{definition}\label{def3.1}
Two sequences $\tilde{x},\tilde{y}\in\tilde{X}_p$,
$\tilde{x}=(x_n)_{n\in\mathbb{N}}$ and
$\tilde{y}=(y_n)_{n\in\mathbb{N}}$ are mutually stable with respect
to a normalizing sequence $\tilde{r}=(r_n)_{n\in\mathbb{N}}$, if
there is a finite limit
\begin{equation}\label{eq3.1}
\underset{n\rightarrow\infty}{\lim}\frac{d(x_n,y_n)}{r_n}:=
\tilde{d}_{\tilde{r}}(\tilde{x},\tilde{y})=
\tilde{d}(\tilde{x},\tilde{y}).
\end{equation}
\end{definition}

The family $\tilde{F}\subseteq\tilde{X}_p$ is {\it self-stable} with
respect to $\tilde{r}$, if every two
$\tilde{x},\tilde{y}\in\tilde{F}$  are mutually stable, $\tilde{F}$
is {\it maximal self-stable} if $\tilde{F}$ is self-stable and for
an arbitrary $\tilde{z}\in\tilde{X}_p\setminus\tilde{F}$ there is
$\tilde{x}\in\tilde{F}$ such that $\tilde{x}$ and $\tilde{z}$ are
not mutually stable. Zorn's lemma leads to the following

\begin{proposition}\label{pr3.2}
Let $(X,d)$ be a metric space and let $p\in X$. Then for every
normalizing sequence $\tilde{r}=\{r_n\}_{n\in\mathbb{N}}$ there
exists a maximal self-stable family $\tilde{X}_{p,\tilde{r}}$ such
that $\tilde{p}=\{p,p,...\}\in\tilde{X}_{p,\tilde{r}}$.
\end{proposition}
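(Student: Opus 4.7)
The plan is to apply Zorn's lemma to the poset of self-stable families containing the distinguished sequence $\tilde{p}$, ordered by inclusion. The hypothesis of Zorn's lemma reduces to two verifications: nonemptiness of the poset and the existence of upper bounds for chains. Neither step involves any metric calculation beyond the observation that $\tilde{d}_{\tilde r}$ is defined pairwise, so the condition of being self-stable is manifestly of finite character.

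First I would fix the normalizing sequence $\tilde{r}=(r_n)_{n\in\mathbb{N}}$ and let $\mathcal{S}$ denote the collection of all self-stable families $\tilde{F}\subseteq\tilde{X}_p$ (with respect to $\tilde{r}$) that contain the constant sequence $\tilde{p}=(p,p,\ldots)$. The set $\mathcal{S}$ is nonempty because $\{\tilde{p}\}\in\mathcal{S}$: mutual stability of $\tilde{p}$ with itself follows from $d(p,p)/r_n=0$, so the limit in \eqref{eq3.1} exists and equals zero. Order $\mathcal{S}$ by set inclusion.

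Next I would show that every chain $\mathcal{C}\subseteq\mathcal{S}$ admits an upper bound in $\mathcal{S}$, namely $\tilde{F}^{*}:=\bigcup_{\tilde{F}\in\mathcal{C}}\tilde{F}$. Clearly $\tilde{p}\in\tilde{F}^{*}$, and for any two sequences $\tilde{x},\tilde{y}\in\tilde{F}^{*}$ there exist $\tilde{F}_1,\tilde{F}_2\in\mathcal{C}$ with $\tilde{x}\in\tilde{F}_1$ and $\tilde{y}\in\tilde{F}_2$. Since $\mathcal{C}$ is totally ordered by inclusion, one of these families contains the other, so $\tilde{x}$ and $\tilde{y}$ belong to a common member of $\mathcal{C}$ and are therefore mutually stable with respect to $\tilde{r}$. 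Hence $\tilde{F}^{*}$ is self-stable and lies in $\mathcal{S}$.

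By Zorn's lemma $\mathcal{S}$ contains a maximal element $\tilde{X}_{p,\tilde{r}}$. I would then check that this family is maximal self-stable in the sense of the definition: if some $\tilde{z}\in\tilde{X}_p\setminus\tilde{X}_{p,\tilde{r}}$ were mutually stable with every element of $\tilde{X}_{p,\tilde{r}}$, then $\tilde{X}_{p,\tilde{r}}\cup\{\tilde{z}\}$ would be a strictly larger self-stable family containing $\tilde{p}$, contradicting maximality in $\mathcal{S}$. Since there is no real obstacle, the only subtle point to present carefully is the passage from a maximal element of the poset $\mathcal{S}$ to a maximal self-stable family, which is what the last implication ensures.
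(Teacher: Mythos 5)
Your proof is correct and is exactly the argument the paper has in mind: the paper offers no details beyond the remark that ``Zorn's lemma leads to the following,'' and your write-up supplies the standard verification (nonemptiness via $\{\tilde{p}\}$, unions of chains as upper bounds, and the translation of poset-maximality into the definition of maximal self-stability). Nothing is missing.
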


Let us consider a function
$\tilde{d}:\tilde{X}_{p,\tilde{r}}\times\tilde{X}_{p,\tilde{r}}\to\mathbb{R}$,
where
$\tilde{d}(\tilde{x},\tilde{y})=\tilde{d}_{\tilde{r}}(\tilde{x},\tilde{y})$
is defined by \eqref{eq3.1}. Obviously, $\tilde{d}$ is symmetric and
nonnegative. Moreover, the triangle inequality for $d$ implies
$$
\tilde{d}(\tilde{x},\tilde{y})\leq
\tilde{d}(\tilde{x},\tilde{z})+\tilde{d}(\tilde{z},\tilde{y})
$$
for all $\tilde{x},\tilde{y},\tilde{z}\in\tilde{X}_{p,\tilde{r}}$.
Hence $(\tilde{X}_{p,\tilde{r}},\tilde{d})$ is a pseudometric space.

Define a relation $\sim$ on $\tilde{X}_{p,\tilde{r}}$ by
$\tilde{x}\sim\tilde{y}$ if and only if
$\tilde{d}_{\tilde{r}}(\tilde{x},\tilde{y})=0$. Let us denote by
$\Omega_{p,\tilde{r}}^X$  the set of equivalence classes in
$\tilde{X}_{p,\tilde{r}}$  under the equivalence relation $\sim$.
For $\alpha,\beta\in\Omega_{p,\tilde{r}}^X$ set
\begin{equation}\label{eq3.2}
\rho(\alpha,\beta)=\tilde{d}(\tilde{x},\tilde{y}),
\end{equation}
where $\tilde{x}\in\alpha$ and $\tilde{y}\in\beta$, then $\rho$ is a
metric on $\Omega_{p,\tilde{r}}^X$ (see, for example, \cite[Ch.~4,
Theorem~15]{Kelly}).

\begin{definition}\label{def3.3}
The space $(\Omega_{p,\tilde{r}}^X,\rho)$ is pretangent to the space
$X$ at the point $p$ with respect to a normalizing sequence
$\tilde{r}$.
\end{definition}

Let $\tau:\mathcal{F}(X)\to\mathbb{R}$ be an extended by Balk
metric, let $p\in X$ and let $(\Omega^X_{p,\tilde{r}},\rho)$ be a
pretangent space to the metric space $(X, \tau^2)$. Now the
"lifting" of $\tau$ on $(\Omega^X_{p,\tilde{r}},\rho)$ is defined as
follows. Let $\mathfrak{U}$ be a nontrivial ultrafilter on
$\mathbb{N}$. For
$\{\alpha_1,\ldots,\alpha_n\}\in\mathcal{F}(\Omega^X_{p,\tilde{r}})$,
$(x^1_m)_{m\in\mathbb{N}}\in\alpha_1,\ldots,(x^n_m)_{m\in\mathbb{N}}\in\alpha_n$
set
$$
\mathcal{X}_{\tau}(\{\alpha_1,\ldots,\alpha_n\})=\mathfrak{U}-
\lim\frac{\tau(Im(x^1_m,\ldots,x^n_m))}{r_m}.
$$
In Theorem \ref{t3.5} it is proved that $\mathcal{X}_{\tau}$ is an
extended by Balk metric on $(\Omega^X_{p,\tilde{r}},\rho)$ and
$\mathcal{X}^2_{\tau}=\rho$. Theorem \ref{t3.10} provides a
characteristic of extended metrics
$\tau:\mathcal{F}(X)\to\mathbb{R}$ for which the equality
$$
\mathcal{X}_{\tau}(A)=\diam_{\rho}\,A
$$
holds for every $A\in\mathcal{F}(\Omega^X_{p,\tilde{r}})$. This
result is used in Corollary \ref{c3.14} for characterization of
$\tau$ for which $\mathcal{X}_{\tau}$ are the extended
"ultrametrics", i.e. satisfy the inequality
$$
\mathcal{X}_{\tau}(A\cup B)\leqslant \max\{\mathcal{X}_{\tau}(A\cup
C), \mathcal{X}_{\tau}(B\cup C)\}
$$
instead of inequality \eqref{eq1.2}.

\section{Extended by Balk metrics and generalized diameters}

Let $X$ be a nonempty set and $\tau:\mathcal{F}(X)\to\mathbb{R}$ be
an extended by Balk metric on $X$. Set

\begin{equation}\label{eq2.1}
\tau^{2}(x,y) := \begin{cases} \tau(\{x,y\}),\quad \text{ if } x\neq y\\
0,\qquad \text{ if } $x=y$
\end{cases}
\end{equation}
for every ordered pair $(x,y)\in X\times X$, where $\{x,y\}$ is the
set whose elements are the points $x$ and $y$.

\begin{proposition}\label{pr2.1}
The function $\tau^{2} : X^2 \to\mathbb{R}$ is a metric for every
nonempty set $X$ and extended metric
$\tau:\mathcal{F}(X)\to\mathbb{R}$.
\end{proposition}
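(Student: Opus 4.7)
The plan is to verify the four axioms of a metric for $\tau^2$ defined by \eqref{eq2.1}, using only Definition~\ref{def1.1}. Symmetry is immediate since $\{x,y\}=\{y,x\}$ as sets, so $\tau^2(x,y)=\tau(\{x,y\})=\tau(\{y,x\})=\tau^2(y,x)$ in the case $x\neq y$, and both sides equal $0$ if $x=y$. The identity of indiscernibles follows directly from \eqref{eq1.1}: if $x\neq y$ then $|\{x,y\}|=2$, so $\tau(\{x,y\})\neq 0$ by \eqref{eq1.1}, hence $\tau^2(x,y)\neq 0$; the converse is built into \eqref{eq2.1}.

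Before doing the triangle inequality, I would first establish nonnegativity of $\tau$ on all of $\mathcal{F}(X)$, which is not stated explicitly in Definition~\ref{def1.1} but is a simple consequence of \eqref{eq1.2}. Taking $A=B=C$ in \eqref{eq1.2} yields $\tau(A)\leqslant 2\tau(A)$, hence $\tau(A)\geqslant 0$ for every $A\in\mathcal{F}(X)$. This in turn gives $\tau^2\geqslant 0$.

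For the triangle inequality, I would apply \eqref{eq1.2} to the singletons $A=\{x\}$, $B=\{z\}$, $C=\{y\}$, obtaining
$$
\tau(\{x\}\cup\{z\})\leqslant \tau(\{x\}\cup\{y\})+\tau(\{y\}\cup\{z\}).
$$
When $x,y,z$ are pairwise distinct, the three unions are the two-point sets $\{x,z\}$, $\{x,y\}$, $\{y,z\}$, and the inequality reads exactly $\tau^2(x,z)\leqslant\tau^2(x,y)+\tau^2(y,z)$.

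The only point requiring mild care, which I expect to be the main (minor) obstacle, is the bookkeeping when two of the points coincide, since Balk's $\tau$ is defined on sets, so $\{u,u\}$ collapses to $\{u\}$ and has $\tau$-value $0$ by \eqref{eq1.1}. I would handle these cases separately: if $x=z$ the left side is $\tau(\{x\})=0$ and the inequality is trivial by nonnegativity; if $x=y$ or $y=z$, one term on the right vanishes and the remaining term equals the left side, so equality holds. Combining all cases yields the triangle inequality, completing the verification that $\tau^2$ is a metric.
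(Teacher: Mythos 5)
Your proof is correct and follows essentially the same route as the paper: nonnegativity from $A=B=C$ in \eqref{eq1.2}, and the triangle inequality from \eqref{eq1.2} applied to three singletons. The only cosmetic difference is that you treat the coincidence cases ($x=z$, $x=y$, $y=z$) by explicit casework, whereas the paper absorbs them into one chain by noting that $\tau(\{u\}\cup\{v\})=\tau^{2}(u,v)$ holds in all cases, including $u=v$ where both sides vanish.
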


\begin{proof}
Obviously, the function $\tau^{2}$ is symmetric and by \eqref{eq1.1}
$\tau^{2}(x,y)=0$ if and only if $x=y$. Putting in \eqref{eq1.2}
$A=B=C$ we obtain $\tau(A)\leqslant 2\tau(A)$ for every
$A\in\mathcal{F}(X)$ that is an equivalent to $\tau(A)\geqslant 0$.
The last inequality implies the nonnegativity of the function
$\tau^{2}$. It remains to prove the triangle inequality for
$\tau^2$. Let $x,y,z$ be arbitrary points from $X$. Putting
$A=\{x\}$, $B=\{y\}$ and $C=\{z\}$ into inequality \eqref{eq1.2} we
obtain

$$
\begin{gathered}
\tau^{2}(x,y) = \tau(\{x\}\cup\{y\}) \leqslant \tau(\{x\}\cup\{z\})
+ \tau(\{z\}\cup\{y\})\\ \leqslant \tau(\{x,z\}) + \tau(\{z,y\}) =
\tau^{2}(x,z) + \tau^{2}(z,y).
\end{gathered}
$$
Thus the triangle inequality is satisfied.
\end{proof}

If $d$ is a metric and $\tau$ is an extended by Balk metric on the
same set $X$ and the equality $d(x,y)=\tau^2(x,y)$ holds for all
$x,y\in X$, we say that $\tau$ is {\it compatible} with $d$.

\begin{remark}\label{r2.2}
The nonnegativity of $\tau$ was earlier proved in \cite{Ba}.
\end{remark}

Recall that a mapping $f:X\to Y$ from a partially ordered set
$(X,\leqslant_X)$ to a partially ordered set $(Y,\leqslant_Y)$ is
called {\it increasing} if the implication
$$
(x\leqslant_X y)\Rightarrow (f(x)\leqslant_Y f(y))
$$
holds for all $x,y\in X$.

Let us put in order the set $\mathcal{F}(X)$ by the set-theoretic
inclusion $\subseteq$ and consider $\mathbb{R}$ with the standard
order $\leqslant$. If $\rho$ is a metric on $X$, then the mapping
$$
\mathcal{F}(X)\ni A \mapsto \diam_{\rho}(A)\in\mathbb{R}
$$
is increasing.

\begin{definition}\label{def2.4}
Let $X\neq\varnothing$ and $k$ be an integer number greater or equal
two. A mapping $f:\mathcal{F}(X)\to\mathbb{R}$ is called
$k$-increasing if the implication
$$
(B\subseteq A) \Rightarrow (f(B)\leqslant f(A))
$$
holds for $A,B\in\mathcal{F}(X)$ with $|B|\leqslant k$.
\end{definition}

\begin{remark}\label{r2.3}
It is clear that every increasing mapping
$f:\mathcal{F}(X)\to\mathbb{R}$ is $k$-increasing for every
$k\geqslant 2$. It is not hard to check that, if $|X|\leqslant k+1$,
then all $k$-increasing mappings are increasing.
\end{remark}

The next example shows that for $|X|\geqslant k+2$ there are
extended by Balk metrics on $X$ which are $k$-increasing but not
$k+1$-increasing mappings.

\begin{example}\label{ex2.5}
Let $|X|\geqslant k+2$ and $t_i$, $i=2,\ldots,k+2$ be some numbers
from the interval $(1,2)$ such that $t_k<t_{k+2}<t_{k+1}$ and
$t_i<t_{i+1}$ for $i=2,\ldots,k$. For $A\in\mathcal{F}(X)$ set
\begin{equation}\label{eq2.2}
\tau(A)=
\begin{cases}
0,    \,\,\qquad\text{ for } |A|=1 \\
t_n,  \qquad\text{ for } |A|=n , \text{ if } 2\leqslant n\leqslant k+1\\
t_{k+2}, \,\quad\text{ for } |A|\geqslant k+2.
\end{cases}
\end{equation}
It follows directly from \eqref{eq2.2} and the restrictions to the
numbers $t_n$ that $\tau$ is $k$-increasing but not
$k+1$-increasing. If $|A\cup C|\neq 1\neq |B\cup C|$ and $\tau(A\cup
B)=t_i$, $\tau(A\cup C) = t_j$, $\tau(B\cup C)=t_l$ then
$t_i,t_j,t_l\in(1,2)$. Hence $t_i\leqslant t_j+t_l$ that implies
\eqref{eq1.2}. Assuming, for example, the equality $1=|B\cup C|$ we
obtain the existence of $x\in X$ such that $B=C=\{x\}$. Then
inequality \eqref{eq1.2}turns into an equality. Case $|A\cup C|=1$
is similar.
\end{example}

\begin{lemma}\label{l2.6}
The following conditions are equivalent for all $X\neq\varnothing$,
$\tau:\mathcal{F}(X)\to\mathbb{R}$ and integer numbers $k\geqslant
2$.
\begin{itemize}
\item[$(i)$] The mapping $\tau$ is a $k$-increasing  function from
$(\mathcal{F}(X),\subseteq)$ to $(\mathbb{R},\leqslant)$.
\item[$(ii)$] The inequality
\begin{equation}\label{eq2.3}
\tau(A)\geqslant\max\{\tau(B)\,:\,B\subseteq A, |B|\leqslant k\}
\end{equation}
holds for every $A\in\mathcal{F}(X)$.
\end{itemize}
\end{lemma}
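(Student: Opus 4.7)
The plan is to verify the two implications directly; both are essentially immediate unpackings of the definition of $k$-increasing, and the only observation needing explicit mention is that the maximum in \eqref{eq2.3} is taken over the nonempty finite collection of subsets of $A$ having cardinality at most $k$, so it exists and is attained (indeed, any singleton $\{a\}$ with $a\in A$ lies in this collection, and $\mathcal{F}(A)$ is finite whenever $A$ is).

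For the direction $(i)\Rightarrow(ii)$, I would fix $A\in\mathcal{F}(X)$ and apply the $k$-increasing hypothesis to each $B\subseteq A$ with $|B|\leqslant k$, obtaining $\tau(B)\leqslant\tau(A)$. Since $\tau(A)$ is thereby an upper bound for the whole collection on the right of \eqref{eq2.3}, it dominates the maximum, which is \eqref{eq2.3}.

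For the direction $(ii)\Rightarrow(i)$, I would take an arbitrary pair $B\subseteq A$ in $\mathcal{F}(X)$ with $|B|\leqslant k$, note that $B$ itself belongs to the family $\{B'\subseteq A : |B'|\leqslant k\}$, and conclude
$$
\tau(B)\leqslant \max\{\tau(B'):B'\subseteq A,\,|B'|\leqslant k\}\leqslant \tau(A)
$$
by \eqref{eq2.3}. This is exactly the $k$-increasing condition.

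There is no substantive obstacle here; the lemma is purely a rephrasing. The only technical care is to check that the maximum in \eqref{eq2.3} is well-defined, which reduces to the finiteness of $\mathcal{F}(A)$ for $A\in\mathcal{F}(X)$ and the nonemptiness of the subfamily being maximized over.
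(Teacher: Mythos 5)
Your proof is correct and is precisely the routine verification the paper itself omits, noting only that ``the proof can be obtained directly from definitions.'' Both implications and the well-definedness of the maximum are handled exactly as one would expect, so nothing further is needed.
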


The proof can be obtained directly from definitions and we omit it
here.

\begin{corollary}\label{c2.7}
Let $X\neq\varnothing$ and $k$ be an integer number greater or equal
two. An extended metric $\tau:\mathcal{F}(X)\to\mathbb{R}$ is a
$k$-increasing mapping from $(\mathcal{F}(X),\subseteq)$ to
$(\mathbb{R},\leqslant)$ if and only if the inequality
$\tau(A)\geqslant\diam_{\tau^{k}}A$ holds for every
$A\in\mathcal{F}(X)$ where $\diam_{\tau^{k}}A$ is defined by
relation \eqref{n*}.
\end{corollary}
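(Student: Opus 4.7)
The plan is to deduce this corollary directly from Lemma~\ref{l2.6} together with the definition of $\diam_{\tau^{k}}A$ given by formula~\eqref{n*}. By Lemma~\ref{l2.6}, an extended metric $\tau$ is $k$-increasing if and only if inequality~\eqref{eq2.3} holds for every $A\in\mathcal{F}(X)$, that is
$$
\tau(A)\geqslant\max\{\tau(B)\,:\,B\subseteq A,\,|B|\leqslant k\}.
$$
So everything reduces to comparing this maximum with $\diam_{\tau^{k}}A=\sup\{\tau(B):B\subseteq A,\,|B|\leqslant k\}$.

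First I would note that, since $A\in\mathcal{F}(X)$ is by definition a nonempty finite set, the collection $\{B\subseteq A:|B|\leqslant k\}$ is also finite (its cardinality is bounded by $2^{|A|}$). Consequently, the set of real numbers $\{\tau(B):B\subseteq A,\,|B|\leqslant k\}$ is finite and nonempty, so its supremum coincides with its maximum. This gives the identity
$$
\diam_{\tau^{k}}A=\max\{\tau(B)\,:\,B\subseteq A,\,|B|\leqslant k\},
$$
which is exactly the right-hand side appearing in~\eqref{eq2.3}.

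Combining these two observations, the inequality $\tau(A)\geqslant\diam_{\tau^{k}}A$ holds for every $A\in\mathcal{F}(X)$ if and only if~\eqref{eq2.3} holds for every $A\in\mathcal{F}(X)$, which by Lemma~\ref{l2.6} is equivalent to $\tau$ being $k$-increasing. There is no real obstacle here; the only substantive point is the replacement of $\sup$ by $\max$ which is legitimate because $\mathcal{F}(X)$ consists of finite sets, so the proof amounts to citing Lemma~\ref{l2.6} and formula~\eqref{n*}.
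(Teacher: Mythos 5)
Your proof is correct and follows exactly the route the paper intends: the corollary is stated as an immediate consequence of Lemma~\ref{l2.6} and formula~\eqref{n*}, and your only added observation --- that for finite $A$ the supremum in \eqref{n*} is a maximum over a finite nonempty collection, hence equals the right-hand side of \eqref{eq2.3} --- is the correct and complete justification.
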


Let $(X,\leqslant_X)$ and $(Y,\leqslant_Y)$ be partially ordered
sets. A mapping $f:X\to Y$ is called {\it decreasing} if the
implication  $(z\leqslant_X y) \Rightarrow (f(z)\geqslant_Y f(y))$
holds for all $z,y\in X$.

In the following definition the relation $B\subset A$ means that we
have $B\subseteq A$ and $B\neq A$.

\begin{definition}\label{def2.10}
Let $X\neq\varnothing$ and $k\geqslant 2$ be an integer number. A
mapping $f:\mathcal{F}(X)\to\mathbb{R}$ is $k$-weakly decreasing if
for every  $A\in\mathcal{F}(X)$ with $|A|> k$ there is a finite
nonempty set $B\subset A$ such that  $f(B)\geqslant f(A)$.
\end{definition}

\begin{lemma}\label{l2.11}
The following conditions are equivalent for all $X\neq\varnothing$,
$k\geqslant 2$ and mappings $\tau:\mathcal{F}(X)\to\mathbb{R}$.
\begin{itemize}
\item[$(i)$] The mapping $\tau$ is $k$-weakly decreasing.

\item[$(ii)$] The inequality
\begin{equation}\label{eq2.5}
\tau(A)\leqslant\max\{\tau(B)\,:\,B\subseteq A, |B|\leqslant k\}
\end{equation}
holds for every $A\in\mathcal{F}(X)$.
\end{itemize}
\end{lemma}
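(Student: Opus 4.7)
The plan is to prove both implications directly, since each direction is essentially a finite termination argument.

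For $(ii)\Rightarrow(i)$, I would fix an arbitrary $A\in\mathcal{F}(X)$ with $|A|>k$. By (ii), $\tau(A)\leqslant\max\{\tau(B):B\subseteq A,\,|B|\leqslant k\}$, and since $A$ is finite this family of subsets is finite, so the maximum is attained by some $B_0\subseteq A$ with $|B_0|\leqslant k$. Because $|B_0|\leqslant k<|A|$, automatically $B_0\subset A$ and $B_0$ is finite and nonempty (it must be nonempty since (ii) requires at least one $B$ in the comparison, and any singleton works). Then $\tau(B_0)\geqslant\tau(A)$, giving the $k$-weakly decreasing property.

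For $(i)\Rightarrow(ii)$, I would split on $|A|$. If $|A|\leqslant k$, then $A$ itself appears on the right side of \eqref{eq2.5}, so the inequality is trivial. If $|A|>k$, I would iterate condition (i): set $A_0:=A$; as long as $|A_i|>k$, apply (i) to obtain a finite nonempty $A_{i+1}\subset A_i$ with $\tau(A_{i+1})\geqslant\tau(A_i)$. Since $|A_{i+1}|<|A_i|$ and all cardinalities are positive integers, the chain must halt after finitely many steps at some $A_N$ with $|A_N|\leqslant k$. Transitivity gives $\tau(A_N)\geqslant\tau(A_0)=\tau(A)$, and since $A_N\subseteq A$, the set $A_N$ is admissible on the right side of \eqref{eq2.5}, so the max there is at least $\tau(A)$.

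The argument has no real obstacle; the only point requiring care is verifying that the descending chain $A_0\supsetneq A_1\supsetneq\cdots$ must terminate at a set of cardinality at most $k$. This is immediate because each step strictly decreases a positive integer, so termination happens at some $N\leqslant |A|-k$, and the termination condition of the iteration is exactly $|A_N|\leqslant k$. No additional properties of $\tau$ (such as the triangle inequality \eqref{eq1.2} or the equivalence \eqref{eq1.1}) are needed, which matches the statement of the lemma that refers to arbitrary mappings $\tau:\mathcal{F}(X)\to\mathbb{R}$.
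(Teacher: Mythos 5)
Your proof is correct and follows essentially the same route as the paper: the paper establishes $(i)\Rightarrow(ii)$ by induction on $|A|$, which is just your descending chain $A_0\supsetneq A_1\supsetneq\cdots\supsetneq A_N$ rolled up into an inductive step, and it likewise treats $(ii)\Rightarrow(i)$ as immediate from the definition. Your explicit remark that the maximum in \eqref{eq2.5} is attained (the family of subsets being finite) and that no properties of $\tau$ beyond it being an arbitrary mapping are used is a welcome, if minor, sharpening of what the paper leaves implicit.
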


\begin{proof}
The implication $(ii)\Rightarrow(i)$ follows directly from
Definition \ref{def2.10}. Let us check the implication
$(i)\Rightarrow(ii)$. Assume that condition $(i)$ is true. Let us
prove inequality \eqref{eq2.5} using induction by $|A|$. If
$|A|=1,\ldots,k$ inequality \eqref{eq2.5} is obvious. Suppose that
\eqref{eq2.5} is proved for $|A|\leqslant n$, $n\in\mathbb{N}$.
Assume $|A|=n+1\geqslant k+1$. By $(i)$ the mapping $\tau$ is
$k$-weakly decreasing. Therefore there is $B\subset A$ such that
$\tau(A)\leqslant\tau(B)$. From the inclusion $B\subset A$ follows
the inequality $|B|\leqslant n$. Using the induction hypothesis we
get

\begin{equation}\label{eq2.6}
\tau(A)\leqslant\tau(B)\leqslant\max\{\tau(C)\,:\,C\subseteq B,\,
|C|\leqslant k\}.
\end{equation}
Since $(C\subseteq B)$ implies $(C\subseteq A)$, we obtain
$$
\max\{\tau(C)\,:\,C\subseteq B,\, |C|\leqslant k\} \leqslant
\max\{\tau(C)\,:\,C\subseteq A,\, |C|\leqslant k\}.
$$
The last inequality and \eqref{eq2.6} give \eqref{eq2.5}.
\end{proof}

The next corollary directly follows from Definition \ref{def1.2*}
and Lemma \ref{l2.11}.

\begin{corollary}\label{c2.12}
Let $X\neq\varnothing$ and $k\geqslant 2$ be an integer number. An
extended by Balk metric $\tau:\mathcal{F}(X)\to\mathbb{R}$ is a
$k$-weakly decreasing mapping from $(\mathcal{F}(X),\subseteq)$ into
$(\mathbb{R},\leqslant)$ if and only if the inequality
$\tau(A)\leqslant\diam_{\tau^{k}}(A)$ holds for every
$A\in\mathcal{F}(X)$.
\end{corollary}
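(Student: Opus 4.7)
The plan is to observe that this corollary is essentially a translation of Lemma \ref{l2.11} into the language of generalized diameters, so the work consists entirely of unpacking two definitions and noticing that a supremum over a finite collection coincides with a maximum.

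First I would invoke Lemma \ref{l2.11} directly: since $\tau$ is a mapping from $\mathcal{F}(X)$ to $\mathbb{R}$, the lemma applies and tells us that $\tau$ is $k$-weakly decreasing if and only if inequality \eqref{eq2.5} holds for every $A\in\mathcal{F}(X)$, namely
$$
\tau(A)\leqslant\max\{\tau(B)\,:\,B\subseteq A,\,|B|\leqslant k\}.
$$
Next I would compare the right-hand side with $\diam_{\tau^k}A$ as given by \eqref{n*}. By Definition \ref{def1.2*},
$$
\diam_{\tau^k}A=\sup\{\tau(B)\,:\,B\subseteq A,\,|B|\leqslant k\}.
$$
Since $A\in\mathcal{F}(X)$ is finite, the collection $\{B\subseteq A:|B|\leqslant k\}$ is itself a finite family, hence the supremum is attained and equals the maximum. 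Therefore the right-hand side of \eqref{eq2.5} equals $\diam_{\tau^k}A$, and inequality \eqref{eq2.5} is literally $\tau(A)\leqslant\diam_{\tau^k}A$.

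Combining the two equivalences gives both implications of the corollary at once. There is no genuine obstacle here; the only point requiring the slightest care is recognizing that the passage from the "max" formulation in Lemma \ref{l2.11} to the "sup" formulation in Definition \ref{def1.2*} is justified by the finiteness of $A$, and that the hypothesis that $\tau$ is an extended by Balk metric (rather than an arbitrary real-valued function) plays no role in the argument beyond guaranteeing that $\diam_{\tau^k}A$ is a well-defined real number.
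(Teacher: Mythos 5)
Your argument is correct and is exactly the route the paper takes: the authors state that Corollary \ref{c2.12} ``directly follows from Definition \ref{def1.2*} and Lemma \ref{l2.11}'', and your proposal simply fills in that observation, including the (correct) remark that the supremum in \eqref{n*} is attained because $A$ is finite and hence coincides with the maximum in \eqref{eq2.5}. Nothing further is needed.
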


Lemmas \ref{l2.6} and \ref{l2.11} give the following.

\begin{corollary}\label{c2.10*}
Let $X\neq\varnothing$, let $k\geqslant 2$ be an integer number and
let $\tau:\mathcal{F}(X)\to\mathbb{R}$ be a $k$-weakly decreasing
mapping. Then $\tau$ is increasing if and only if it is
$k$-increasing.
\end{corollary}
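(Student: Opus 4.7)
The plan is to derive the corollary by combining Lemmas \ref{l2.6} and \ref{l2.11} into a single equality and then reading off monotonicity.

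One direction is free: by Remark \ref{r2.3}, every increasing mapping on $\mathcal{F}(X)$ is automatically $k$-increasing, with no use of the $k$-weakly decreasing hypothesis. So the only content is the converse.

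For the converse, I would assume $\tau$ is both $k$-weakly decreasing and $k$-increasing. Applying Lemma \ref{l2.11} gives $\tau(A)\leqslant\max\{\tau(B):B\subseteq A,|B|\leqslant k\}$, while Lemma \ref{l2.6} gives the reverse inequality $\tau(A)\geqslant\max\{\tau(B):B\subseteq A,|B|\leqslant k\}$, for every $A\in\mathcal{F}(X)$. The key observation is therefore the equality
$$
\tau(A)=\max\{\tau(B):B\subseteq A,\ |B|\leqslant k\}.
$$
From this, increasingness is immediate: if $B\subseteq A$ in $\mathcal{F}(X)$, then every $C\subseteq B$ with $|C|\leqslant k$ is also a subset of $A$ with $|C|\leqslant k$, so
$$
\tau(B)=\max\{\tau(C):C\subseteq B,\ |C|\leqslant k\}\leqslant\max\{\tau(C):C\subseteq A,\ |C|\leqslant k\}=\tau(A).
$$

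There is no real obstacle here; the whole argument is a one-line combination of the two preceding lemmas followed by monotonicity of the maximum under enlargement of the index set. The only thing to be careful about is that both maxima are actually attained (so the formula makes sense), which is guaranteed because $\{B\subseteq A:|B|\leqslant k\}$ is a nonempty finite family for any $A\in\mathcal{F}(X)$.
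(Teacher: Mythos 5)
Your proposal is correct and follows exactly the paper's own argument: combine inequality \eqref{eq2.3} (from Lemma \ref{l2.6}) with inequality \eqref{eq2.5} (from Lemma \ref{l2.11}) to get the equality $\tau(A)=\max\{\tau(B):B\subseteq A,\ |B|\leqslant k\}$, from which increasingness follows by enlarging the index set. You merely spell out the last step, which the paper leaves implicit.
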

\begin{proof}
It is sufficient to  verify that if $\tau$ is $k$-increasing then
$\tau$ is increasing. Indeed, if $\tau$ is $k$-increasing, then
inequalities \eqref{eq2.3} and \eqref{eq2.5} imply
$$
\tau(A)=\max\{\tau(B)\,:\,B\subseteq A, |B|\leqslant k\},\quad
A\in\mathcal{F}(A).
$$
The increase of $\tau$ follows.
\end{proof}

Combining corollaries \ref{c2.7}, \ref{c2.12} and \ref{c2.10*} we
get

\begin{theorem}\label{t2.11}
The following statements are equivalent for all nonempty $X$,
integer $k\geqslant 2$ and extended metrics
$\tau:\mathcal{F}(X)\to\mathbb{R}$.
\begin{itemize}
\item[$(i)$] The equality $\tau(A) = \diam_{\tau^k}\,A$ holds for
every $A\in\mathcal{F}(X)$, where $\diam_{\tau^k}\,A$ is determined
by Definition \ref{def1.2*}.

\item[$(ii)$] $\tau$ is $k$-increasing and $k$-weakly decreasing.

\item[$(iii)$] $\tau$ is increasing and $k$-weakly decreasing.
\end{itemize}
\end{theorem}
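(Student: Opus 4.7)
The plan is to assemble the three equivalences directly from the corollaries already established in the section, without needing any fresh arguments about the extended Balk axioms. The point is that Corollaries \ref{c2.7}, \ref{c2.12}, and \ref{c2.10*} each capture one algebraic ``half'' of the statement, and Theorem \ref{t2.11} is just their conjunction.

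First, I would show $(i) \Leftrightarrow (ii)$. Corollary \ref{c2.7} tells us that $\tau$ is $k$-increasing if and only if $\tau(A) \geqslant \diam_{\tau^k} A$ for every $A \in \mathcal{F}(X)$, while Corollary \ref{c2.12} tells us that $\tau$ is $k$-weakly decreasing if and only if $\tau(A) \leqslant \diam_{\tau^k} A$ for every $A \in \mathcal{F}(X)$. Equality $\tau(A) = \diam_{\tau^k} A$ for all $A$ is exactly the conjunction of these two inequalities, so $(i)$ holds if and only if $\tau$ is simultaneously $k$-increasing and $k$-weakly decreasing, which is $(ii)$.

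Next, I would show $(ii) \Leftrightarrow (iii)$. Since both conditions include $k$-weak decrease, it suffices to check, under the hypothesis that $\tau$ is $k$-weakly decreasing, that ``$k$-increasing'' and ``increasing'' are equivalent. One direction is immediate from Remark \ref{r2.3}: every increasing mapping on $\mathcal{F}(X)$ is automatically $k$-increasing. The reverse direction is precisely the content of Corollary \ref{c2.10*}. Combining these two directions gives $(ii) \Leftrightarrow (iii)$.

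Since both equivalences have been established, the three conditions $(i)$, $(ii)$, $(iii)$ are mutually equivalent. There is no real obstacle in this proof: all the substantive work (the induction in Lemma \ref{l2.11}, the bookkeeping in Lemma \ref{l2.6}, and the observation in the proof of Corollary \ref{c2.10*} that the two inequalities force an equality) has already been done. The only care needed is to make explicit that the intersection of $\{\tau(A) \geqslant \diam_{\tau^k} A\}$ and $\{\tau(A) \leqslant \diam_{\tau^k} A\}$ is precisely $\{\tau(A) = \diam_{\tau^k} A\}$, so that $(i) \Leftrightarrow (ii)$ reduces to citing Corollaries \ref{c2.7} and \ref{c2.12} side by side.
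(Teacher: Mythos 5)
Your proof is correct and follows exactly the paper's route: the paper also obtains Theorem \ref{t2.11} by combining Corollaries \ref{c2.7}, \ref{c2.12} and \ref{c2.10*}, and you have simply spelled out the assembly in more detail.
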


\begin{definition}\label{def1.2}
Let $\rho$ be a metric on $X$ and $\tau:\mathcal{F}(X)\to\mathbb{R}$
be an extended by Balk metric on $X$. We say that $\tau$ is
generated by $\rho$ if $\tau(A)=\diam_{\rho}\,A$ for any
$A\in\mathcal{F}(A)$.
\end{definition}

\begin{theorem}\label{t2.13}
Let $\tau:\mathcal{F}(X)\to\mathbb{R}$ be an extended by Balk metric
on a nonempty set $X$. The following statements are equivalent.
\begin{itemize}
\item[$(i)$] There is a mapping $\mu:X^2\to\mathbb{R}$ such that
$\mathcal{F}(A)=\max\{\mu(x,y)\,:\,x,y\in A\}$ for every
$A\in\mathcal{F}(X)$.
\item[$(ii)$] There is a metric on $X$ which generates $\tau$.
\item[$(iii)$]  $\tau$ is generated by $\tau^2$.
\item[$(iv)$]  $\tau$ is 2-increasing and 2-weakly decreasing.
\item[$(v)$]  $\tau$ is increasing and 2-weakly decreasing.
\end{itemize}
\end{theorem}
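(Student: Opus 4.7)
The plan is to piggy-back on Theorem~\ref{t2.11} with $k=2$ for the equivalences not involving $(i)$ and $(ii)$. By Definition~\ref{def1.2}, condition $(iii)$ says precisely that $\tau(A)=\diam_{\tau^2}A$ for every $A\in\mathcal{F}(X)$, which is item $(i)$ of Theorem~\ref{t2.11} specialized to $k=2$. Hence Theorem~\ref{t2.11} immediately delivers $(iii)\Leftrightarrow(iv)\Leftrightarrow(v)$, and only the insertions of $(i)$ and $(ii)$ into this chain remain.

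The implications $(iii)\Rightarrow(ii)\Rightarrow(i)$ are immediate: $\tau^2$ is a metric by Proposition~\ref{pr2.1}, so it serves as the generating metric in $(ii)$; and if $(ii)$ holds with generating metric $\rho$, then setting $\mu:=\rho$ and noting that every $A\in\mathcal{F}(X)$ is finite (so the supremum is attained as a maximum) supplies $(i)$. The substantive implication is $(i)\Rightarrow(iii)$. Assume $\tau(A)=\max\{\mu(x,y):x,y\in A\}$ for some $\mu:X^2\to\mathbb{R}$. First I would plug in singletons: $\tau(\{x\})=0$ via \eqref{eq1.1} forces $\mu(x,x)=0$ for every $x\in X$. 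Then \eqref{eq2.1} yields $\tau^2(x,y)=\max\{\mu(x,y),\mu(y,x)\}$ for $x\neq y$. For $A\in\mathcal{F}(X)$ with $|A|\geqslant 2$, letting $(x,y)$ range over ordered pairs of distinct elements of $A$ gives
\begin{equation*}
\diam_{\tau^2}A=\max_{x,y\in A,\,x\neq y}\tau^2(x,y)=\max_{x,y\in A,\,x\neq y}\max\{\mu(x,y),\mu(y,x)\}=\max_{x,y\in A,\,x\neq y}\mu(x,y),
\end{equation*}
and adjoining the vanishing diagonal values $\mu(x,x)=0$ (which are dominated by the non-negative $\tau^2$-values) does not affect the maximum, so $\diam_{\tau^2}A=\tau(A)$. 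The case $|A|=1$ is trivial from $\mu(x,x)=0$ and $\tau(\{x\})=0$.

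The main obstacle -- though mild -- is that $\mu$ is an arbitrary function on $X^2$, not assumed symmetric or even non-negative. One therefore cannot identify $\tau^2$ with $\mu$ directly; the necessary trick is to symmetrize through $\max\{\mu(x,y),\mu(y,x)\}$ and to use the singleton computation to pin down $\mu$ on the diagonal, after which the final reduction to $\diam_{\tau^2}A$ and the invocation of Theorem~\ref{t2.11} are routine.
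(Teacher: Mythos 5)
Your proposal is correct and follows essentially the same route as the paper: both rest on Theorem~\ref{t2.11} with $k=2$ for the block $(iii)\Leftrightarrow(iv)\Leftrightarrow(v)$ together with the trivial implications $(iii)\Rightarrow(ii)\Rightarrow(i)$. The only difference is in how the cycle is closed --- the paper asserts $(i)\Leftrightarrow(v)$ ``immediately from the definitions,'' while you verify $(i)\Rightarrow(iii)$ directly by showing $\tau(A)=\diam_{\tau^2}A$; your symmetrization $\max\{\mu(x,y),\mu(y,x)\}$ and the diagonal computation $\mu(x,x)=0$ are a correct (indeed slightly more careful) treatment of the fact that $\mu$ is not assumed symmetric or nonnegative.
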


\begin{proof}
The implications $(iii)\Rightarrow(ii)$ and $(ii)\Rightarrow(i)$ are
obvious. The equivalences $(v)\Leftrightarrow(iv)$ and
$(iv)\Leftrightarrow(iii)$ follow from Theorem \ref{t2.11}. It
remains to note that $(i)\Leftrightarrow(v)$ follows immediately
from the definitions of increasing mapping and 2-weakly decreasing
one.
\end{proof}

In the next section we will prove an analog of Theorem \ref{t2.13}
for the symmetric $G$-metrics.

\section{Extended by Balk metrics and $G$-metrics}
The domain of $\tau^3$ (see formula \eqref{eq1.5}) is the set
$X^3=X\times X\times X$. Different generalized metrics with this
domain were considered at least since 60s of the last century
\cite{1*,2*,3*}. The so-called $G$-metric (see Definition
\ref{def1.3}) is among the most important from these
generalizations. The $G$-metric was introduced by Mustafa and Sims
\cite{4*,5*} and has applications in the fixed point theory.

In the current section we, in particular, show that the functions
$\tau^3:X^3\to\mathbb{R}$ generated by increasing extended by Balk
metrics $\tau:\mathcal{F}(X)\to\mathbb{R}$ are symmetric (in the
sense of Definition \ref{def1.4}) $G$-metrics on $X$.

\begin{lemma}\label{l2.2*}
Let $X\neq\varnothing$ and let $\tau:\mathcal{F}(X)\to\mathbb{R}$ be
an increasing extended by Balk metric. Then $\tau^3$ is a symmetric
$G$-metric on $X$.
\end{lemma}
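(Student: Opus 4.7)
The plan is to verify each of the five axioms $(i)$--$(v)$ from Definition \ref{def1.3} and the symmetry property from Definition \ref{def1.4} in turn, using directly the definition of $\tau^3$ via $Im$, the defining properties of an extended by Balk metric, and the hypothesis that $\tau$ is increasing.

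First, axioms $(i)$, $(ii)$ and $(iv)$ I would dispatch immediately. For $(i)$, if $x=y=z$ then $Im(x,y,z)=\{x\}$, hence $\tau^3(x,y,z)=\tau(\{x\})=0$ by \eqref{eq1.1}. For $(ii)$, $Im(x,x,y)=\{x,y\}$ has cardinality $2$ whenever $x\neq y$, so \eqref{eq1.1} forces $\tau^3(x,x,y)=\tau(\{x,y\})>0$. Axiom $(iv)$ is automatic because $Im(x_1,x_2,x_3)$ depends only on the underlying set and is therefore permutation-invariant. The symmetry axiom from Definition \ref{def1.4} is equally immediate: $Im(x,y,y)=Im(y,x,x)=\{x,y\}$ when $x\neq y$, and both sides are $\tau(\{x\})=0$ when $x=y$.

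The first place where the increasing hypothesis enters is axiom $(iii)$. Here I would split on whether $z\in\{x,y\}$: if $z=x$ (the case $z=y$ is excluded), then $Im(x,y,z)=\{x,y\}=Im(x,x,y)$ and $(iii)$ holds with equality; otherwise $Im(x,x,y)=\{x,y\}\subsetneq\{x,y,z\}=Im(x,y,z)$, and since $\tau$ is increasing on $(\mathcal{F}(X),\subseteq)$, the required inequality $\tau(\{x,y\})\leqslant\tau(\{x,y,z\})$ follows.

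The key step is axiom $(v)$, the rectangle inequality. I would derive it by a single application of the Balk triangle inequality \eqref{eq1.2} with the substitution $A:=\{x\}$, $B:=\{y,z\}$, $C:=\{a\}$. Indeed, regardless of which (if any) of the points $x,y,z,a$ coincide, one checks directly from \eqref{eq1.4}--\eqref{eq1.6} that $A\cup B=Im(x,y,z)$, $A\cup C=Im(x,a,a)$, and $C\cup B=Im(a,y,z)$. Then \eqref{eq1.2} reads exactly
$$
\tau^3(x,y,z)\leqslant \tau^3(x,a,a)+\tau^3(a,y,z),
$$
which is $(v)$. I expect this substitution to be the main (and essentially only) non-cosmetic step; the rest of the proof is bookkeeping about the function $Im$, and the increasing hypothesis is used only in $(iii)$.
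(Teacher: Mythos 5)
Your proof is correct, and for axiom $(v)$ it is genuinely cleaner than the paper's. The paper handles $(v)$ by a four-way case analysis (all of $x,y,z$ distinct; $y=z$; $x=z$; $x=y$), using the substitution $A=\{x\}$, $B=\{y,z\}$, $C=\{a\}$ only in the all-distinct case and falling back on the triangle inequality for $\tau^2$ together with the increasing hypothesis in the degenerate cases. Your observation that the identities $\{x\}\cup\{y,z\}=Im(x,y,z)$, $\{x\}\cup\{a\}=Im(x,a,a)$, $\{a\}\cup\{y,z\}=Im(a,y,z)$ hold as set equalities \emph{regardless} of coincidences among $x,y,z,a$ makes the single application of \eqref{eq1.2} work uniformly, so the case split is unnecessary; as a bonus, this shows the monotonicity of $\tau$ is needed only for axiom $(iii)$, which is a slightly sharper accounting than the paper's. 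The treatments of $(i)$, $(iii)$, $(iv)$ and of the symmetry condition match the paper's. One small imprecision: in $(ii)$ you cite only \eqref{eq1.1}, which gives $\tau(\{x,y\})\neq 0$ but not $\tau(\{x,y\})>0$; positivity additionally requires the nonnegativity of $\tau$, which comes from \eqref{eq1.2} with $A=B=C$ (this is exactly how the paper argues, via Proposition \ref{pr2.1}), so you should add that one line.
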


\begin{proof}
By Definition \eqref{eq1.4} $\tau^3$ is a symmetric $G$-metric if
and only if the equality $\tau^3(x,y,y)=\tau^3(y,x,x)$ holds for all
$x,y\in X$. This equality immediately follows from \eqref{eq1.5} and \eqref{eq1.6}.

Let us check conditions $(i)-(v)$ of Definition \ref{def1.3}.

\begin{itemize}
\item[$(i)$] For every $x$ the equality $\tau^3(x,x,x)=0$ follows from
\eqref{eq1.1}.

\item[$(ii)$] The inequality $\tau^3(x,x,y)>0$ for $x\neq y$ follows from
the equality $\tau^3(x,x,y)=\tau^2(x,y)$ and the fact that $\tau^2$
is a metric on $X$ (see Proposition \ref{pr2.1}).

\item[$(iii)$] The inequality $\tau^3(x,x,y)\leqslant\tau^3(x,y,z)$
follows because $\tau$ is increasing.

\item[$(iv)$] The arguments of the function $\tau$ on the right-hand side
of equality \eqref{eq1.5} are sets, that automatically gives the
invariance of $\tau^3$ with respect to the permutations of
arguments.

\item[$(v)$] We must prove the inequality
\begin{equation}\label{eq2.2*}
\tau^3(x,y,z)\leqslant\tau^3(x,a,a)+\tau^3(a,y,z)
\end{equation}
for all $x,y,z,a\in X$. The inequality holds if $x=y=z$ since
$\tau^3$ is a nonnegative function and $\tau(x,x,x)=0$. Now let
$x\neq y\neq z\neq x$. Substituting $A=\{x\}$, $B=\{y,z\}$,
$C=\{a\}$ in \eqref{eq1.2} we obtain
$$
\tau^3(x,y,z)=\tau(A\cup B)\leqslant \tau(A\cup C) + \tau(B\cup
C)=\tau^3(x,a,a) + \tau^3(a,y,z).
$$

If $y=z$, inequality \eqref{eq2.2*} is equivalent to the triangle
inequality
$$
\tau^2(x,y)\leqslant\tau^2(x,a)+\tau^2(a,y),
$$
that was proved in Proposition \ref{pr2.1}. Let $x=z$. Then
\eqref{eq2.2*} can be written as
\begin{equation}\label{eq2.3*}
\tau^3(x,x,y)\leqslant\tau^3(x,a,a)+\tau^3(a,y,x).
\end{equation}
Since $\tau$ is increasing, the inequality
$\tau^3(a,y,x)\geqslant\tau^3(a,y,y)$ holds. Therefore, it is
sufficient to check
$\tau^3(x,x,y)\leqslant\tau^3(x,a,a)+\tau^3(a,y,y)$ which again
reduces to the triangle inequality for $\tau^2$. It remains to
consider the case where $x=y$. With this assumption \eqref{eq2.2*}
we get
\begin{equation}\label{eq2.4**}
\tau^3(x,x,z)\leqslant\tau^3(x,a,a)+\tau^3(a,x,z).
\end{equation}
Again from the increase of $\tau$ we obtain
$\tau^3(a,x,z)\geqslant\tau^3(a,z,z)$. Hence it suffices to prove
the inequality $\tau^3(x,x,z)\leqslant\tau^3(x,a,a)+\tau^3(a,z,z)$,
which also follows from the triangle inequality.
\end{itemize}
\end{proof}

Now we want to prove the converse of Lemma \ref{l2.2*}. To do this
it suffices for given symmetric $G$-metric $\Phi:X^3\to\mathbb{R}$
to construct an increasing extended by Balk metric
$\tau:\mathcal{F}(X)\to\mathbb{R}$ such that
\begin{equation}\label{eq2.4*}
\Phi(x_1,x_2,x_3) = \tau(Im(x_1,x_2,x_3)),\quad (x_1,x_2,x_3)\in X^3,
\end{equation}
where $Im(x_1,x_2,x_3)$ was defined
by relation \eqref{eq1.6}. We will carry out this construction in
two steps.

\begin{itemize}
\item[$\bullet$] For given $G$-metric $\Phi:X^3\to\mathbb{R}$ we first find
an increasing mapping $\tilde{\tau}:\mathcal{F}^3(X)\to\mathbb{R}$,
$\mathcal{F}^3(X)=\{A\in\mathcal{F}(X)\,:\,|A|\leqslant 3\}$, which
satisfies \eqref{eq2.4*} and \eqref{eq1.1}, \eqref{eq1.2} with
$\tau=\tilde{\tau}$. (This is almost what we need but the domain of
$\tilde{\tau}$ is $\mathcal{F}^3(X)$).

\item[$\bullet$] Second, we expand $\tilde{\tau}$ to an increasing extended by Balk
metric $\tau:\mathcal{F}(X)\to\mathbb{R}$.
\end{itemize}

\begin{lemma}\label{l2.3*}
Let $X\neq\varnothing$. The following statements are equivalent for
every function $G:X^3\to\mathbb{R}$.
\begin{itemize}
\item[$(i)$] $G$ satisfies condition $(iv)$ of Definition
\ref{def1.3} and is symmetric in the sense that
\begin{equation}\label{eq2.5*}
G(x,y,y)=G(y,x,x)
\end{equation}
for all $x,y\in X$.

\item[$(ii)$] There is a mapping $\tilde{\tau}:\mathcal{F}^3(X)\to\mathbb{R}$
such that equality \eqref{eq2.4*} holds for every $(x_1,x_2,x_3)\in
X^3$ with $\Phi=G$ and $\tau=\tilde{\tau}$.
\end{itemize}
\end{lemma}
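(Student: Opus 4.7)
The plan is to split the biconditional and show that (ii) is simply a reformulation of the well-definedness of the assignment $\{x_1,x_2,x_3\}\mapsto G(x_1,x_2,x_3)$ at the set level.

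\textbf{The direction $(ii)\Rightarrow(i)$.} First I would observe that $Im(x_1,x_2,x_3)$, being a set, coincides with $Im(x_{\sigma_1},x_{\sigma_2},x_{\sigma_3})$ for every permutation $\sigma$ of $\{1,2,3\}$; applying $\tilde\tau$ and using \eqref{eq2.4*} with $G=\Phi$ gives condition $(iv)$ of Definition \ref{def1.3}. For \eqref{eq2.5*} I would note that $Im(x,y,y)=Im(y,x,x)$ (equal to $\{x,y\}$ when $x\neq y$ and to $\{x\}$ when $x=y$), so $G(x,y,y)=\tilde\tau(Im(x,y,y))=\tilde\tau(Im(y,x,x))=G(y,x,x)$.

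\textbf{The direction $(i)\Rightarrow(ii)$.} Here I would define $\tilde\tau:\mathcal{F}^3(X)\to\mathbb{R}$ by stratifying according to $|A|$:
\begin{itemize}
\item For $A=\{x\}$, set $\tilde\tau(A):=G(x,x,x)$.
\item For $A=\{x,y\}$ with $x\neq y$, set $\tilde\tau(A):=G(x,x,y)$.
\item For $A=\{x,y,z\}$ with $x,y,z$ pairwise distinct, set $\tilde\tau(A):=G(x,y,z)$.
\end{itemize}
The only nontrivial point is well-definedness on two-element sets, because a priori the six values
$$
G(x,x,y),\ G(x,y,x),\ G(y,x,x),\ G(y,y,x),\ G(y,x,y),\ G(x,y,y)
$$
could disagree. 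Condition $(iv)$ collapses the first three into one value and the last three into another value, and the extra symmetry \eqref{eq2.5*} identifies these two values. For three-element sets, condition $(iv)$ alone gives well-definedness, and for singletons there is nothing to check.

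\textbf{Verification of \eqref{eq2.4*}.} I would finish by checking $G(x_1,x_2,x_3)=\tilde\tau(Im(x_1,x_2,x_3))$ by cases: if $x_1=x_2=x_3$ it is immediate; if all three are distinct it follows from the definition; if exactly two coincide, one uses condition $(iv)$ to bring the coincident pair to the first two slots and then reads off the value from the $|A|=2$ definition.

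\textbf{Main obstacle.} There is no real obstacle --- the content of the lemma is essentially tautological --- but the pedagogically important observation is the precise role of the symmetry hypothesis: condition $(iv)$ already identifies the three ``cyclic rotations'' within each of $G(\cdot,\cdot,y)$ and $G(\cdot,y,y)$, and \eqref{eq2.5*} is exactly what is needed (and all that is needed) to bridge those two orbits so that $\tilde\tau$ on two-element sets is unambiguous.
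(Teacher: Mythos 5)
Your proof is correct and follows essentially the same route as the paper: both reduce $(i)\Rightarrow(ii)$ to checking that $G$ is constant on the fibres of the map $(x_1,x_2,x_3)\mapsto Im(x_1,x_2,x_3)$, with the only nontrivial case being the two-element image, where condition $(iv)$ collapses each orbit of three triples and \eqref{eq2.5*} bridges the two orbits. The converse direction is likewise the same observation that $Im$ is permutation-invariant and $Im(x,y,y)=Im(y,x,x)$ (which the paper delegates to the proof of Lemma \ref{l2.2*}).
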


\begin{proof}
The implication $(ii)\Rightarrow(i)$ has already been proved in the
proof of Lemma \ref{l2.2*}.

Let us verify the implication $(i)\Rightarrow(ii)$. Suppose $(i)$
holds. It is sufficient to check that the equality
\begin{equation}\label{eq2.6*}
Im(x_1,x_2,x_3) = Im(y_1,y_2,y_3)
\end{equation}
implies
\begin{equation}\label{eq2.7*}
G(x_1,x_2,x_3) = G(y_1,y_2,y_3).
\end{equation}
Let \eqref{eq2.6*} hold. If $|Im(x_1,x_2,x_3)|=1$, then there is
$x\in X$ such that $x_i=x=y_i$ for every $i\in\{1,2,3\}$. In this
case \eqref{eq2.7*} transforms to the trivial equality
$G(x,x,x)=G(x,x,x)$. If $|Im(x_1,x_2,x_3)|=3$, then \eqref{eq2.7*}
follows from the invariance of $G$ with respect to the permutations
of arguments. For the case $|Im(x_1,x_2,x_3)|=2$ there are $x,y\in
X$ for which the triple $(x_1,x_2,x_3)$ coincides with one of the
triples $(x,x,y), (x,y,x), (y,x,x), (y,y,x), (y,x,y), (x,y,y)$. The
same holds for $(y_1,y_2,y_3)$ also. Now to prove \eqref{eq2.7*} we
can use \eqref{eq2.5*} and the invariance of $G$ with respect to the
permutations of arguments.
\end{proof}

\begin{remark}\label{r2.4*}
Since the mapping $X^3\ni(x_1,x_2,x_3)\mapsto
Im(x_1,x_2,x_3)\in\mathcal{F}^3(X)$ is surjective, the existence of
$\tilde{\tau}:\mathcal{F}^3(X)\to\mathbb{R}$ for which the equality
\begin{equation}\label{eq2.8*}
G(x_1,x_2,x_3) = \tilde{\tau}(Im(x_1,x_2,x_3))
\end{equation}
holds for every $(x_1,x_2,x_3)\in X^3$ implies the uniqueness of
$\tilde{\tau}$.
\end{remark}

\begin{lemma}\label{l2.5*}
Let $X\neq\varnothing$ and let $G:X^3\to\mathbb{R}$ be a symmetric
$G$-metric. Then there is an increasing mapping
$\tilde{\tau}:\mathcal{F}^3(X)\to\mathbb{R}$ such that: equality
\eqref{eq2.8*} holds for every $(x_1,x_2,x_3)\in X^3$; equivalence
\eqref{eq1.1} holds with $\tilde{\tau}=\tau$ for every
$A\in\mathcal{F}^3(X)$; inequality \eqref{eq1.2} holds with
$\tilde{\tau}=\tau$ for all $A\cup B, A\cup C, B\cup
C\in\mathcal{F}^3(X)$.
\end{lemma}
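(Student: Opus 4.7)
The plan is to define $\tilde\tau$ on $\mathcal{F}^3(X)$ directly from $G$ via the image map and then check the four required properties one by one, reserving the triangle inequality for last. First, I would invoke Lemma~\ref{l2.3*}: any symmetric $G$-metric satisfies condition $(iv)$ of Definition~\ref{def1.3} together with $G(x,y,y)=G(y,x,x)$, so that lemma produces a well-defined $\tilde\tau:\mathcal{F}^3(X)\to\mathbb{R}$ satisfying \eqref{eq2.8*}. Equivalence \eqref{eq1.1} is then immediate: $\tilde\tau(\{x\})=G(x,x,x)=0$ by $(i)$; $\tilde\tau(\{x,y\})=G(x,x,y)>0$ for $x\neq y$ by $(ii)$; and $\tilde\tau(\{x,y,z\})=G(x,y,z)\geqslant G(x,x,y)>0$ for distinct $x,y,z$ by $(iii)$.

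For the increasing property on $\mathcal{F}^3(X)$, the only nontrivial inclusion beyond those involving singletons (handled by nonnegativity of $G$, Remark~\ref{r1.4*}) is $\{x,y\}\subseteq\{x,y,z\}$ with $x,y,z$ distinct, which translates to $G(x,x,y)\leqslant G(x,y,z)$, precisely condition $(iii)$ applied with $z\neq y$.

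The main obstacle is the triangle inequality $\tilde\tau(A\cup B)\leqslant\tilde\tau(A\cup C)+\tilde\tau(B\cup C)$ for $A,B,C$ with all three pairwise unions in $\mathcal{F}^3(X)$ (which forces $A,B,C\in\mathcal{F}^3(X)$). I would split into cases on $n=|A\cup B|$. The case $n=1$ is trivial from $\tilde\tau\geqslant 0$. For $n=3$, write $A\cup B=\{x,y,z\}$; the key combinatorial observation is that one may rename $x,y,z$ so that $x\in A$ and $\{y,z\}\subseteq B$. Indeed, the larger of $A$ and $B$ has cardinality at least $2$ and misses at most one element of $A\cup B$, and that missing element lies in the smaller set; the symmetry of the desired inequality in $A$ and $B$ lets us arrange the missing element to be the one labelled $x$. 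Fixing any $c\in C$, condition $(v)$ then gives $G(x,y,z)\leqslant G(x,c,c)+G(c,y,z)$, i.e., $\tilde\tau(A\cup B)\leqslant\tilde\tau(Im(x,c,c))+\tilde\tau(Im(c,y,z))$, and since $Im(x,c,c)\subseteq\{x,c\}\subseteq A\cup C$ and $Im(c,y,z)\subseteq\{c,y,z\}\subseteq B\cup C$, the already-established increasing property supplies the bounds $\tilde\tau(A\cup C)$ and $\tilde\tau(B\cup C)$.

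For $n=2$, I would label so that $A\cup B=\{x,y\}$ with $x\in A$ and $y\in B$ (always possible by the same kind of symmetry argument), apply $(iv)$ to rewrite $G(x,x,y)=G(y,x,x)$, then $(v)$ with $a=c\in C$ to obtain $G(y,x,x)\leqslant G(y,c,c)+G(c,x,x)$, and finally the symmetric identity to replace $G(c,x,x)$ by $G(x,c,c)$. The increasing property then bounds the two summands by $\tilde\tau(B\cup C)$ and $\tilde\tau(A\cup C)$, completing the argument.
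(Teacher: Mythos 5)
Your proof is correct and follows essentially the same route as the paper: Lemma~\ref{l2.3*} supplies the well-defined $\tilde{\tau}$, conditions $(i)$--$(iii)$ give \eqref{eq1.1} and the increase, and condition $(v)$ combined with monotonicity yields the triangle inequality via a case split on $|A\cup B|$. The only (harmless) difference is organizational: instead of first reducing to a singleton $C$ and then treating the subcases $(|A|,|B|)=(3,\cdot),(2,2),(1,2)$ separately (the paper needs the auxiliary inequality \eqref{eq2.13*} in the $(2,2)$ subcase), you normalize the labelling so that $x\in A$ and $\{y,z\}\subseteq B$ and absorb everything into one application of $(v)$ plus monotonicity, which packages the same ingredients more uniformly.
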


\begin{proof}
The existence of $\tilde{\tau}:\mathcal{F}^3(X)\to\mathbb{R}$ which
satisfies \eqref{eq2.8*} for $(x_1,x_2,x_3)\in X^3$ has already
proved in Lemma \ref{l2.3*}. The increase of $\tilde{\tau}$ and the
equivalence
$$
(\tilde{\tau}(A)=0)\Leftrightarrow(|A|=1)
$$
follow from conditions $(i)-(iii)$ of Definition \ref{def1.3} and
equality \eqref{eq2.8*}. We must prove only the inequality
\begin{equation}\label{eq2.9*}
\tilde{\tau}(A\cup B)\leqslant\tilde{\tau}(A\cup C) +
\tilde{\tau}(B\cup C)
\end{equation}
for $A\cup B, A\cup C, B\cup C\in\mathcal{F}^3(X)$.

Note that \eqref{eq2.9*} is trivial if $|A\cup B|=1$ because in
this case $\tilde{\tau}(A\cup B)=0$ holds. So we can suppose $|A\cup
B|=2$ or $|A\cup B|=3$. Since $\tilde{\tau}$ is increasing, it
suffices to prove \eqref{eq2.9*} for $C=\{a\}$ where $a$ is an
arbitrary point of $X$.

Let $|A\cup B|=2$. If, in addition, we have $|A|=2$, then
\begin{equation}\label{eq2.10*}
A\cup B = A \subseteq A\cup C.
\end{equation}
Hence using the nonnegativity of $G$ (see Remark \ref{r1.4*}) and
the increase of $\tilde{\tau}$ we obtain \eqref{eq2.9*}. If $|B|=2$,
then the proof is similar. Now let $A=\{x\}$, $B=\{y\}$ and $x\neq
y$. Then
\begin{equation}\label{eq2.11*}
\begin{gathered}
\tilde{\tau}(A\cup B)=G(x,y,y),\qquad\tilde{\tau}(A\cup
C)=G(x,a,a),\\
\tilde{\tau}(B\cup C) = G(y,a,a) = G(a,y,y).
\end{gathered}
\end{equation}
Putting $z=y$ in condition $(v)$ of Definition \ref{def1.3} we find
$$
G(x,y,y)\leqslant G(x,a,a) + G(a,y,y).
$$
This inequality and \eqref{eq2.11*} give \eqref{eq2.9*}.

Suppose $|A\cup B|=3$. If $\max(|A|,|B|)=3$, then we have
\eqref{eq2.10*} or $A\cup B \subseteq B\cup C$. Hence using the
increase of $\tau^*$ we get \eqref{eq2.9*}. If $|A|=2$ and $|B|=2$,
then there are some distinct $x,y,z\in X$ for which $A=\{x,y\}$ and
$B=\{y,z\}$. Consequently $\tilde{\tau}(A\cup B) = G(x,y,z)$,
$\tilde{\tau}(A\cup C) = G(x,y,a)$ and $\tilde{\tau}(B\cup C) =
G(y,z,a)$. Inequality \eqref{eq2.9*} can be rewritten as
\begin{equation}\label{eq2.12*}
G(x,y,z)\leqslant G(x,y,a) + G(y,z,a).
\end{equation}
Using condition $(iii)$ of Definition \ref{def1.3} and the
symmetry of $G$ we obtain the inequality
\begin{equation}\label{eq2.13*}
G(x,a,a)\leqslant G(x,y,a)
\end{equation}
for all $x,y,a\in X$. Now \eqref{eq2.13*} and condition $(v)$ of
Definition \ref{def1.3} imply \eqref{eq2.12*}. To complete the proof
it remains to consider the next alternative
$$
\text{ either }\quad |A|=1\text{ and } |B|=2\quad \text{ or }\quad
|B|=1\text{ and } |A|=2.
$$
By the symmetry of the occurrences of $A$ and $B$ in \eqref{eq2.9*}
it suffices to consider the first case. Putting $A=\{x\}$ and
$B=\{y,z\}$ and expressing $\tilde{\tau}$ via $G$, we get from
\eqref{eq2.9*} to the inequality $G(x,y,z)\leqslant G(x,a,a)+G(a,y,z)$.
Condition $(v)$ of Definition \ref{def1.3} claims the validity of the last inequality.
\end{proof}

In accordance with our plan it remains expand the function
$\tilde{\tau}:\mathcal{F}^3(X)\to\mathbb{R}$ to an increasing
extended by Balk metric $\tau:\mathcal{F}(X)\to\mathbb{R}$. It is
easy enough to do for all increasing $\tilde{\tau}:\mathcal{F}^k(X)\to\mathbb{R}$,
$\mathcal{F}^k(X) = \{A\in\mathcal{F}(X)\,:\,|A|\leqslant k\}$ with
an arbitrary integer $k\geqslant 2$.

For $A\in\mathcal{F}(X)$ and
$\tilde{\tau}:\mathcal{F}^k(X)\to\mathbb{R}$, $k\geqslant 1$ we set
\begin{equation}\label{eq2.16**}
\diam_{\tilde{\tau}}(A):=\max\{\tilde{\tau}(B)\,:\, B\subseteq A,
B\in\mathcal{F}^k(X)\},
\end{equation}
c.f. formula \eqref{n*}.

\begin{proposition}\label{pr2.6*}
Let $X\neq\varnothing$, let $k\geqslant 2$ be an integer number and
let $\tilde{\tau}:\mathcal{F}^k(X)\to\mathbb{R}$ be an increasing
mapping such that the equivalence
\begin{equation}\label{eq2.16*}
(\tilde{\tau}(A)=0)\Leftrightarrow (|A|=1)
\end{equation}
holds for each $A\in\mathcal{F}^k(X)$ and the inequality
\begin{equation}\label{eq2.17*}
\tilde{\tau}(A\cup B)\leqslant \tilde{\tau}(A\cup C) +
\tilde{\tau}(B\cup C)
\end{equation}
holds  as soon as $A\cup B, A\cup C, B\cup C\in\mathcal{F}^k(X)$.
Then the function
\begin{equation}\label{eq2.18*}
\tau:\mathcal{F}(X)\to\mathbb{R},\quad
\tau(A)=\diam_{\tilde{\tau}}(A),\quad A\in\mathcal{F}(X)
\end{equation}
is an increasing extended by Balk metric such that
\begin{equation}\label{eq2.19*}
\tau^k(x_1,\ldots,x_k) =
\tilde{\tau}(Im(x_1,\ldots,x_k))\quad\text{for}\quad
(x_1,\ldots,x_k)\in X^k.
\end{equation}
\end{proposition}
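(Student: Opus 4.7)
The plan is to verify, in sequence, that $\tau$ restricts to $\tilde{\tau}$ on $\mathcal{F}^k(X)$, that $\tau$ satisfies Definition~\ref{def1.1} and is increasing, and finally that identity \eqref{eq2.19*} drops out for free. First I would note that the right-hand side of \eqref{eq2.18*} is a well-defined real number, since the maximum is taken over the finite nonempty family of subsets $B\subseteq A$ with $|B|\leqslant k$ (every $A$ has at least a singleton subset). Using the hypothesis that $\tilde{\tau}$ is increasing on $\mathcal{F}^k(X)$, for each $A\in\mathcal{F}^k(X)$ the set $A$ itself is the largest admissible $B$, whence $\tau(A)=\tilde{\tau}(A)$; in particular $\tau$ extends $\tilde{\tau}$, and $\tau\geqslant 0$ since $\tilde{\tau}\geqslant 0$ (put $A=B=C$ in \eqref{eq2.17*}).

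Monotonicity of $\tau$ on $(\mathcal{F}(X),\subseteq)$ is straightforward: if $B\subseteq A$, every subset of $B$ of size $\leqslant k$ is also a subset of $A$, so the maximum defining $\tau(A)$ dominates the one defining $\tau(B)$. The equivalence \eqref{eq1.1} follows just as directly: for $|A|=1$ the unique admissible subset is $A$ itself, giving $\tau(A)=\tilde{\tau}(A)=0$ by \eqref{eq2.16*}; for $|A|\geqslant 2$, any two-point $\{x,y\}\subseteq A$ lies in $\mathcal{F}^k(X)$ because $k\geqslant 2$, and $\tilde{\tau}(\{x,y\})>0$ forces $\tau(A)>0$.

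The main step is the unrestricted triangle inequality \eqref{eq1.2}. Given $A,B,C\in\mathcal{F}(X)$, choose $D\subseteq A\cup B$ with $|D|\leqslant k$ and $\tilde{\tau}(D)=\tau(A\cup B)$. Partition $D$ disjointly as $D=D_A\sqcup D_B$ with $D_A:=D\cap A$ and $D_B:=D\setminus A\subseteq B$. If one of the pieces is empty, then $D$ already lies in $A\cup C$ or in $B\cup C$, and monotonicity of $\tau$ together with $\tau\geqslant 0$ gives \eqref{eq1.2} immediately. Otherwise both $D_A$ and $D_B$ are nonempty, which is the crux: since $|D_A|+|D_B|=|D|\leqslant k$ and each summand is at least $1$, we get $|D_A|,|D_B|\leqslant k-1$. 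Pick any $c\in C$ and apply the restricted hypothesis \eqref{eq2.17*} with $A'=D_A$, $B'=D_B$, $C'=\{c\}$; the three unions $A'\cup B'=D$, $A'\cup C'=D_A\cup\{c\}$, and $B'\cup C'=D_B\cup\{c\}$ all have cardinality $\leqslant k$, so
$$
\tilde{\tau}(D)\leqslant\tilde{\tau}(D_A\cup\{c\})+\tilde{\tau}(D_B\cup\{c\})\leqslant\tau(A\cup C)+\tau(B\cup C),
$$
where the second inequality uses monotonicity of $\tau$ applied to the inclusions $D_A\cup\{c\}\subseteq A\cup C$ and $D_B\cup\{c\}\subseteq B\cup C$.

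Identity \eqref{eq2.19*} is then immediate: $Im(x_1,\ldots,x_k)\in\mathcal{F}^k(X)$ because it has cardinality $\leqslant k$, and the opening observation gives $\tau^k(x_1,\ldots,x_k)=\tau(Im(x_1,\ldots,x_k))=\tilde{\tau}(Im(x_1,\ldots,x_k))$. I expect the only genuine obstacle to be the cardinality bookkeeping in the triangle inequality: the whole argument hinges on the fact that a nontrivial bipartition of a set of size $\leqslant k$ has each part of size $\leqslant k-1$, so that augmenting each part by a single element of $C$ keeps us inside $\mathcal{F}^k(X)$ and lets the restricted hypothesis \eqref{eq2.17*} deliver the bound.
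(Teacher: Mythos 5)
Your proof is correct and follows essentially the same route as the paper's: choose $D\subseteq A\cup B$ with $|D|\leqslant k$ realizing $\tau(A\cup B)$, split into the case $D\subseteq A$ or $D\subseteq B$ versus the case where $D$ meets both, replace $C$ by a singleton $\{c\}$, and invoke the restricted inequality \eqref{eq2.17*} after checking the cardinality bounds. The only cosmetic difference is that you use the disjoint partition $D=D_A\sqcup D_B$ where the paper uses the (possibly overlapping) $A'=A\cap D$, $B'=B\cap D$; your version makes the bound $|D_A|,|D_B|\leqslant k-1$ slightly more transparent.
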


\begin{proof}
The increase of $\tau$ follows directly from equality
\eqref{eq2.16**}. This equality and the increase of $\tilde{\tau}$
give also equality \eqref{eq2.19*}. Using \eqref{eq2.16**} it is
easy to prove \eqref{eq2.16*} for every $A\in\mathcal{F}(X)$. It
remains to show that the inequality
\begin{equation}\label{eq2.20*}
\tau(A\cup B)\leqslant\tau(A\cup C) + \tau(B\cup C),
\end{equation}
holds for all $A,B,C\in\mathcal{F}(X)$.

Let $A,B$ and $C$ be arbitrary elements of $\mathcal{F}(X)$. Let us
choose an element $D\in\mathcal{F}^k(X)$ such that
\begin{equation}\label{eq2.21*}
D\subseteq A\cup B\text{ and }\tau(A\cup B)=\tilde{\tau}(D).
\end{equation}
If $D\subseteq A$ or $D\subseteq B$, then by increase of $\tau$ we
have $\tau(D)\leqslant\tau(A)\leqslant\tau(A\cup C)$ or,
respectively, $\tau(D)\leqslant\tau(B)\leqslant\tau(B\cup C)$. These
inequalities together with \eqref{eq2.21*} give \eqref{eq2.20*}.
Thus we can assume that
\begin{equation}\label{eq2.22*}
D\setminus A\neq \varnothing \neq D\setminus B.
\end{equation}
Set $A' := A\cap D$, $ B' := B\cap D$. Then we have
\begin{equation}\label{eq2.23*}
D=D\cap(A\cup B) = A'\cup B'.
\end{equation}
Condition \eqref{eq2.22*} and the inequality $|A|\leqslant k$ give
the inequalities
\begin{equation}\label{eq2.23**}
|A'|\leqslant k-1, \quad\text{and}\quad |B'|\leqslant k-1.
\end{equation}
Using \eqref{eq2.21*} and \eqref{eq2.23*} we write \eqref{eq2.20*}
in the form
$$
\tau(A'\cup B')\leqslant\tau(A\cup C)+\tau(B\cup C).
$$
Since $\tau$ is increasing, $A'\subseteq A$ and $B'\subseteq B$, it
suffices to check the inequality
$$
\tau(A'\cup B')\leqslant\tau(A'\cup C)+\tau(B'\cup C).
$$
Let $C'=\{c\}$ where $c\in C$. Since $\tau$ is increasing, we have
$$
\tau(A'\cup C) + \tau(B'\cup C) \geqslant \tau(A'\cup
C')+\tau(B'\cup C').
$$
Therefore it is sufficient to show that
\begin{equation}\label{eq2.25*}
\tau(A'\cup B')\leqslant\tau(A'\cup C')+\tau(B'\cup C').
\end{equation}
To prove \eqref{eq2.25*} note that \eqref{eq2.23**} implies that
$$
|A'\cup C'|\leqslant |A'| + |C'|\leqslant (k-1)+1=k,
$$
and, similarly that $|B'\cup C'|\leqslant k$. Thus $A'\cup C', B'\cup
C'\in\mathcal{F}^k(X)$. In addition we have $A'\cup B' =
D\in\mathcal{F}^k(X)$. Now using \eqref{eq2.19*} we can rewrite
\eqref{eq2.25*} in the form $\tilde{\tau}(A'\cup
B')\leqslant\tilde{\tau}(A'\cup C')+\tilde{\tau}(B'\cup C')$ that
holds by \eqref{eq2.17*}.

Thus the function $\tau$ defined on $\mathcal{F}(X)$ by formula
\eqref{eq2.18*} has all properties of extended by Balk metric.
\end{proof}

\begin{remark}\label{r2.7*}
Proposition \ref{pr2.6*} is false for $k=1$. In this case we have
$\diam_{\tilde{\tau}}(A)=0$ for every $A\in\mathcal{F}(X)$.
\end{remark}

\begin{theorem}\label{t2.9*}
Let $X\neq\varnothing$. The following statements are equivalent for
every function $G:X^3\to\mathbb{R}$.
\begin{itemize}
\item[$(i)$] $G$ is a symmetric $G$-metric in the sense of Definition \ref{def1.3}.

\item[$(ii)$] There is an increasing extended by Balk metric
$\tau:\mathcal{F}(X)\to\mathbb{R}$ such that $\tau^3=G$.
\end{itemize}
\end{theorem}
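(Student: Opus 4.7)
The implication $(ii)\Rightarrow(i)$ is exactly the content of Lemma \ref{l2.2*}, so nothing new is required there. The real work is the converse $(i)\Rightarrow(ii)$, and the two bullet points inserted between Lemma \ref{l2.2*} and Lemma \ref{l2.3*} already lay out the strategy: first build a function on the small family $\mathcal{F}^3(X)$ that carries exactly the data of $G$, and then extend it to all of $\mathcal{F}(X)$ without losing the Balk axioms or monotonicity.

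The plan is therefore to simply assemble the machinery already proved. Assume $(i)$, so $G$ is a symmetric $G$-metric. First I apply Lemma \ref{l2.5*}: this supplies an increasing mapping $\tilde\tau:\mathcal{F}^3(X)\to\mathbb{R}$ satisfying the representation formula \eqref{eq2.8*}, the zero-characterization \eqref{eq1.1}, and the restricted triangle inequality \eqref{eq1.2} for all triples $A,B,C$ whose pairwise unions lie in $\mathcal{F}^3(X)$. So the "correct" function on subsets of size at most $3$ is already pinned down.

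Second, I feed this $\tilde\tau$ into Proposition \ref{pr2.6*} with $k=3$. The hypotheses of that proposition (increase, equivalence \eqref{eq2.16*}, and inequality \eqref{eq2.17*}) are precisely the conclusions of Lemma \ref{l2.5*}, so it applies without adjustment. The output is an increasing extended by Balk metric $\tau:\mathcal{F}(X)\to\mathbb{R}$, defined by $\tau(A)=\diam_{\tilde\tau}(A)$, that moreover satisfies the compatibility identity \eqref{eq2.19*}. Specializing \eqref{eq2.19*} to $k=3$ and combining it with \eqref{eq2.8*} gives
\[
\tau^3(x_1,x_2,x_3)=\tilde\tau(Im(x_1,x_2,x_3))=G(x_1,x_2,x_3)
\]
for every $(x_1,x_2,x_3)\in X^3$, which is exactly $\tau^3=G$. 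This completes $(i)\Rightarrow(ii)$.

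I do not expect any genuine obstacle at this stage, since both of the components have been proved in full above; the theorem is a packaging statement. The only thing worth double-checking is the input to Proposition \ref{pr2.6*}: Lemma \ref{l2.5*} indeed produces $\tilde\tau$ on all of $\mathcal{F}^3(X)$ with the required increase and the restricted triangle inequality, so $\tau=\diam_{\tilde\tau}$ automatically agrees with $\tilde\tau$ on $\mathcal{F}^3(X)$ (by increase plus $\tilde\tau(B)\le\tilde\tau(A)$ for $B\subseteq A$ in $\mathcal{F}^3(X)$), which is what keeps $\tau^3=G$ intact.
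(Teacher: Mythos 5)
Your proposal is correct and follows essentially the same route as the paper: the authors also dispose of $(ii)\Rightarrow(i)$ by Lemma \ref{l2.2*} and prove $(i)\Rightarrow(ii)$ by combining Lemma \ref{l2.3*}/Lemma \ref{l2.5*} with Proposition \ref{pr2.6*} for $k=3$. Your closing remark that $\tau=\diam_{\tilde\tau}$ restricts to $\tilde\tau$ on $\mathcal{F}^3(X)$ is exactly the content of equality \eqref{eq2.19*} already established in that proposition, so nothing further is needed.
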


\begin{proof}
The implication $(ii)\Rightarrow(i)$ was obtained in Lemma
\ref{l2.2*}. Let us prove the implication $(i)\Rightarrow(ii)$. Suppose
$(i)$ holds. By Lemma \ref{l2.3*} there is
$\tilde{\tau}:\mathcal{F}^3(X)\to\mathbb{R}$ such that
$G(x_1,x_2,x_3)=\tilde{\tau}(Im(x_1,x_2,x_3))$ for every
$(x_1,x_2,x_3)\in X^3$. Using Lemma \ref{l2.5*} we get the
equivalence $(\tilde{\tau}(A)=0)\Leftrightarrow (|A|=1)$ for every
$A\in\mathcal{F}^3(X)$ and the inequality $\tilde{\tau}(A\cup
B)\leqslant\tilde{\tau}(A\cup C)+\tilde{\tau}(B\cup C)$ for $A\cup
B, A\cup C, B\cup C\in\mathcal{F}^3(X)$. By Proposition \ref{pr2.6*}
there is an increasing extended by Balk metric
$\tau:\mathcal{F}(X)\to\mathbb{R}$ such
that$\tau|_{\mathcal{F}^3(X)}=\tilde{\tau}$. The implication
$(i)\Rightarrow(ii)$ is proved.
\end{proof}

The next theorem is an analog of Theorem \ref{t2.13}.

\begin{theorem}\label{t2.15}
Let $\tau:\mathcal{F}(X)\to\mathbb{R}$ be an extended by Balk metric
on $X\neq\varnothing$. The following statements are equivalent.
\begin{itemize}
\item[$(i)$] There is a function $G:X^3\to\mathbb{R}$ such that the equality
\begin{equation}\label{eq2.26*}
\tau(A)=\max\{G(x,y,z)\,:\,x,y,z\in A\}
\end{equation}
holds for every $A\in\mathcal{F}(X)$.
\item[$(ii)$] There is a symmetric $G$-metric $G:X^3\to\mathbb{R}$ such that
equality \eqref{eq2.26*} holds for every $A\in\mathcal{F}(X)$.
\item[$(iii)$] For every $A\in\mathcal{F}(X)$ the equality \eqref{eq2.26*}
holds with $G=\tau^3$.
\item[$(iv)$]  $\tau$ is 3-increasing and 3-weakly decreasing.
\item[$(v)$]  $\tau$ is increasing and 3-weakly decreasing.
\end{itemize}
\end{theorem}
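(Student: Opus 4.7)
The plan is to run the cycle $(v) \Leftrightarrow (iv) \Leftrightarrow (iii) \Rightarrow (ii) \Rightarrow (i) \Rightarrow (v)$, mirroring the proof of Theorem~\ref{t2.13} but using the $G$-metric machinery of the present section. The equivalences $(iii) \Leftrightarrow (iv) \Leftrightarrow (v)$ I would obtain for free by quoting Theorem~\ref{t2.11} at $k = 3$, since condition $(iii)$ here is literally the statement $\tau = \diam_{\tau^3}$.

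Next I would handle $(iii) \Rightarrow (ii) \Rightarrow (i)$. Given $(iii)$, Theorem~\ref{t2.11} guarantees that $\tau$ is increasing, so Lemma~\ref{l2.2*} produces a symmetric $G$-metric, namely $\tau^3$ itself; the choice $G = \tau^3$ then turns the equality in $(iii)$ into the one in $(ii)$, proving $(iii) \Rightarrow (ii)$. The implication $(ii) \Rightarrow (i)$ is immediate because a symmetric $G$-metric is, in particular, a function $X^3 \to \mathbb{R}$.

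The only part requiring a little direct work is $(i) \Rightarrow (v)$, and it is also the one I expect to be the main (though very mild) obstacle, because the hypothesis only gives a real-valued function $G$ on $X^3$ with no $G$-metric axioms. Increase of $\tau$ is forced by the monotonicity of a maximum under set inclusion. For 3-weak decrease, I would pick, for each $A \in \mathcal{F}(X)$ with $|A| > 3$, a triple $(x,y,z) \in A^3$ attaining the (finite) maximum defining $\tau(A)$ and set $B = \{x,y,z\}$; then $B$ is a nonempty finite proper subset of $A$ with $|B| \leqslant 3$, and $\tau(B) \geqslant G(x,y,z) = \tau(A)$, which is exactly what Definition~\ref{def2.10} requires. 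Closing the cycle at this point finishes the proof without ever needing to derive the $G$-metric axioms for the function $G$ appearing in $(i)$.
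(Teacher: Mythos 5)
Your proposal is correct and follows essentially the same route as the paper: the equivalences $(v)\Leftrightarrow(iv)\Leftrightarrow(iii)$ come from Theorem~\ref{t2.11} with $k=3$, the implication $(iii)\Rightarrow(ii)$ from the increase of $\tau$ together with Lemma~\ref{l2.2*}, $(ii)\Rightarrow(i)$ is trivial, and $(i)\Rightarrow(v)$ is checked directly from the definitions. Your explicit argument for $(i)\Rightarrow(v)$ (monotonicity of the maximum for the increase, and choosing a maximizing triple $\{x,y,z\}\subset A$ for the $3$-weak decrease) is exactly the verification the paper leaves to the reader.
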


\begin{proof}
If $(iii)$ holds, then $\tau$ is increasing.
Then, by Lemma \ref{l2.2*}, $\tau^3$ is symmetric $G$-metric.
Therefore the implication $(iii)\Rightarrow(ii)$ is true. The
implication $(ii)\Rightarrow(i)$ is obvious. The implication
$(i)\Rightarrow(v)$ follows directly from definitions. To complete
the proof it remains to note that Theorem \ref{t2.11} gives
$(v)\Leftrightarrow(iv)$ and $(iv)\Leftrightarrow(iii)$.
\end{proof}

\section{Extended metrics on pretangent spaces}

Let $(X,d)$ be a metric space with a metric $d$. The infinitesimal
geometry of the space $X$ can be investigated by constructing of
metric spaces that, in some sense, are tangent to $X$. If $X$ is
equipped with an additional structure, then the question arises of
the lifting this structure on the tangent spaces. More specifically,
let $\tau:\mathcal{F}(X)\to\mathbb{R}$ be an extended by Balk metric
and let $(\Omega,\rho)$ be a tangent space to a metric space
$(X,d)$. Suppose $\tau$ is compatible with $d$.

{\it How to build an extended by Balk metric which is
compatible with the metric $\rho$?}

The answer to this question depends on the construction of the tangent
space $(\Omega,\rho)$. Today there are several approaches to construct
tangent spaces to metric spaces. Probably, the most famous of these are
the Gromov-Hausdorff convergence and the ultra-convergence. The sequential
approach to the construction of "pretangent" and "tangent" spaces was
proposed in \cite{DM1} and developed in \cite{DAK1,DAK2,BD2,Bil,Dor, Dov}.

To construct Balk's extended metrics on pretangent spaces we will
use ultrafilters on $\mathbb{N}$. Recall the necessary definitions.

Let $X$ be a nonempty set and let $\mathfrak{B}(X)$ be the set of all
its subsets. A set $\mathfrak{U}\subseteq\mathfrak{B}(X)$ is
called a {\it filter} on $X$ if
$\varnothing\notin\mathfrak{U}\neq\varnothing$ and the implication
$$
(A\in\mathfrak{U}\,\&\,B\in\mathfrak{U})\Rightarrow (A\cap
B\in\mathfrak{U}))\quad\text{and}\quad(A\supseteq
B\,\&\,B\in\mathfrak{U})\Rightarrow (A\in\mathfrak{U}))
$$
hold for all $A,B\subseteq X$.

A filter $\mathfrak{U}$ on $X$ is called an {\it ultrafilter} if the
implication
$(\mathcal{P}\supseteq\mathfrak{U})\Rightarrow(\mathfrak{U}=\mathcal{P})$
holds for every filter $\mathcal{P}$ on $X$. An ultrafilter
$\mathfrak{U}$ on $X$ is called {\it trivial} if there is a point
$x_0\in X$ such that $(A\in\mathfrak{U})\Leftrightarrow(x_0\in A)$
for $A\in\mathfrak{B}(X)$. Otherwise $\mathfrak{U}$ is a nontrivial
ultrafilter. Let  $\mathfrak{U}$ be a filter on $X$. A mapping
$\Phi:X\to\mathbb{R}$ converges to a point $t\in\mathbb{R}$ by the
filter $\mathfrak{U}$, symbolically $\mathfrak{U}-\lim\Phi(x)=t$, if
\begin{equation}\label{eq3.3*}
\{x\in X\,:\, |\Phi(x)-t|<\varepsilon\}\in\mathfrak{U}
\end{equation}
for every $\varepsilon>0$.

\begin{example}\label{ex3.4}
If $\mathcal{M}$ is the Frechet filter on $\mathbb{N}$, (is the
family of subsets of $\mathbb{N}$ with finite complements) and
$(x_n)_{n\in\mathbb{N}}$ is a sequence of real numbers, then the
limit $\underset{n\to\infty}{\lim}x_n$ exists if and only if there
is $\mathcal{M}-\lim x_n$. In this case
$\underset{n\to\infty}{\lim}x_n = \mathcal{M}-\lim x_n$.
\end{example}

We shall use the following properties of the nontrivial ultrafilters
$\mathfrak{U}$ on $\mathbb{N}$.
\begin{itemize}
\item[$(i_1)$] Every bounded sequence
$(x_n)_{n\in\mathbb{N}}$, $x_n\in\mathbb{R}$, has
$\mathfrak{U}-\lim x_n$;

\item[$(i_2)$] If $(x_n)_{n\in\mathbb{N}}$ converges in the usual
sense, then $\underset{n\to\infty}{\lim}x_n = \mathfrak{U}-\lim
x_n$;

\item[$(i_3)$] The relations
$\mathfrak{U}-\lim cx_n = c (\mathfrak{U}-\lim x_n)$ and
$$
\mathfrak{U}-\lim (x_n+y_n) =
(\mathfrak{U}-\lim x_n) + (\mathfrak{U}-\lim y_n)
$$
hold for every $c\in\mathbb{R}$ and $(x_n)_{n\in\mathbb{N}}$,
$(y_n)_{n\in\mathbb{N}}$ which have $\mathfrak{U}$-limits;

\item[$(i_4)$] If $(x_n)_{n\in\mathbb{N}}$ is
$\mathfrak{U}$-convergent and $\underset{n\to\infty}{\lim}(x_n -y_n)
= 0$, then $(y_n)_{n\in\mathbb{N}}$ is $\mathfrak{U}$-convergent and
$\mathfrak{U}-\lim x_n = \mathfrak{U}-\lim y_n$.
\end{itemize}

The above is a trivial modification of Problem 19 from Chapter 17
\cite{KT}.

\begin{lemma}\label{l3.4*}
Let $\mathfrak{U}$ be a nontrivial ultrafilter on $\mathbb{N}$. Then
for every bounded sequence $(x_m)_{m\in\mathbb{N}}$,
$x_m\in\mathbb{R}$ its $\mathfrak{U}$-limit coincides with a limit
point of this sequence. Conversely, if $t$ is a limit
point of $(x_m)_{m\in\mathbb{N}}$, then there is a nontrivial
ultrafilter $\mathfrak{U}$ on $\mathbb{N}$ such that
$\mathfrak{U}-\lim x_m = t$.
\end{lemma}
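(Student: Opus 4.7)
The plan is to prove the two directions of the lemma separately, based on the preliminary observation that any nontrivial ultrafilter $\mathfrak{U}$ on $\mathbb{N}$ contains only infinite sets (equivalently, $\mathfrak{U}$ refines the Frechet filter $\mathcal{M}$). To verify this, suppose $A\in\mathfrak{U}$ is finite. For each $n_0\in A$, the ultrafilter dichotomy yields either $\{n_0\}\in\mathfrak{U}$ or $\mathbb{N}\setminus\{n_0\}\in\mathfrak{U}$. The first case forces $\mathfrak{U}$ to be the principal ultrafilter at $n_0$, contradicting nontriviality; in the second case $A\cap(\mathbb{N}\setminus\{n_0\})=A\setminus\{n_0\}\in\mathfrak{U}$. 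Iterating over the finitely many elements of $A$ eventually gives $\varnothing\in\mathfrak{U}$, a contradiction.

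For the direct implication, let $(x_m)_{m\in\mathbb{N}}$ be bounded. By property $(i_1)$ the value $t=\mathfrak{U}-\lim x_m$ exists, and definition \eqref{eq3.3*} guarantees that $E_\varepsilon:=\{m\in\mathbb{N}:|x_m-t|<\varepsilon\}\in\mathfrak{U}$ for every $\varepsilon>0$. The preliminary observation then forces each $E_\varepsilon$ to be infinite, which is precisely the assertion that $t$ is a limit point of $(x_m)_{m\in\mathbb{N}}$.

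For the converse, suppose $t$ is a limit point of $(x_m)$, so that every $E_\varepsilon$ is infinite. The identity $E_{\varepsilon_1}\cap E_{\varepsilon_2}=E_{\min(\varepsilon_1,\varepsilon_2)}$, together with the stability of infinitude under removal of finitely many elements, shows that the family $\mathcal{B}:=\{E_\varepsilon\cap F:\varepsilon>0,\ F\in\mathcal{M}\}$ consists of nonempty sets and is closed under finite intersections. Hence $\mathcal{B}$ generates a proper filter $\mathcal{F}$ containing $\mathcal{M}$ as well as every $E_\varepsilon$. By Zorn's lemma $\mathcal{F}$ extends to an ultrafilter $\mathfrak{U}$, which is automatically nontrivial since it refines $\mathcal{M}$ (and so contains no singleton), and the inclusions $E_\varepsilon\in\mathfrak{U}$ together with \eqref{eq3.3*} yield $\mathfrak{U}-\lim x_m=t$.

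The only delicate point is the preliminary claim that nontriviality of $\mathfrak{U}$ excludes finite members, plus the verification that $\mathcal{B}$ enjoys the finite intersection property; once these are in hand the rest of the argument is routine and reproduces the standard characterization of ultrafilter limits on $\mathbb{N}$.
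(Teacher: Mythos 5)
Your proof is correct. The forward direction is exactly the paper's argument: $E_\varepsilon=\{m:|x_m-t|<\varepsilon\}\in\mathfrak{U}$ by \eqref{eq3.3*}, every member of a nontrivial ultrafilter on $\mathbb{N}$ is infinite, hence $t$ is a limit point; the paper merely asserts the infinitude of members of $\mathfrak{U}$, whereas you supply the (standard) dichotomy argument for it. For the converse your route differs in packaging: the paper extracts an infinite index set $A$ along which $x_m\to t$, picks an ultrafilter containing $A$, and appeals to property $(i_1)$, while you take the filter generated by the sets $E_\varepsilon$ together with the Frechet filter $\mathcal{M}$, verify the finite intersection property, and extend by Zorn's lemma. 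The two constructions are essentially equivalent (the filter generated by $A$ and $\mathcal{M}$ already contains every $E_\varepsilon$), but yours has two small advantages: it makes explicit that the resulting ultrafilter is nontrivial --- a point the paper leaves implicit, and which matters, since a principal ultrafilter at a point of $A$ also contains $A$ yet need not give the limit $t$ --- and it reads the conclusion $\mathfrak{U}-\lim x_m=t$ straight off the definition \eqref{eq3.3*} rather than through $(i_1)$, so it does not use boundedness in that direction.
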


\begin{proof}

The first statement of the lemma follows from the definition of the
limit points and formula \eqref{eq3.3*} if put $X=\mathbb{N}$ and
$\Phi(n)=x_n$ in this formula and take into account that all
elements of nontrivial ultrafilter on $\mathbb{N}$ are infinite
subsets of $\mathbb{N}$.

To prove the second statement, note that for every limit point $a$
of the sequence $(x_m)_{m\in\mathbb{N}}$ there is an infinite
$A\subseteq\mathbb{N}$ such that $\underset{\underset{m\in
A}{m\to\infty}}{\lim}x_m=a$. Choose an ultrafilter
$\mathbf{\mathfrak{U}}$ on $\mathbb{N}$ for which
$A\in\mathbf{\mathfrak{U}}$. Now using property $(i_1)$ we obtain
$a=\mathfrak{U}-\lim x_m$.
\end{proof}

Let $(X,d)$ be a metric space, $X\neq\varnothing$, and let
$\tau:\mathcal{F}(X)\to\mathbb{R}$ be a compatible with $d$ extended
by Balk metric. Let $\{\alpha_1,\ldots,\alpha_n\}$ be a finite
nonempty subset of pretangent space $\Omega^X_{p,\tilde{r}}$ and
$\tilde{X}_{p,\tilde{r}}$ be a maximal self-stable subset of
$\tilde{X}_p$ which corresponds $\Omega^X_{p,\tilde{r}}$. Denote by
$\pi$ the projection $\tilde{X}_{p,\tilde{r}}$ on
$\Omega^X_{p,\tilde{r}}$, i.e. if
$\tilde{x}\in\tilde{X}_{p,\tilde{r}}$ then
$\pi(\tilde{x})=\{\tilde{y}\in\tilde{X}_{p,\tilde{r}}\,:\,
\tilde{d}_{\tilde{r}}(\tilde{x},\tilde{y})=0\}$. (See formula
\eqref{eq3.1}). Choose $\tilde{x}^i=(x^i_m)_{m\in\mathbb{N}}$,
$i=1,\ldots,n$ from $\tilde{X}_{p,\tilde{r}}$ such that
$\pi(\tilde{x}^i)=\alpha_i,\quad i=1,\ldots,n$. Put

\begin{equation}\label{eq3.3}
\mathcal{X}_{\tau}(\{\alpha_1,\ldots,\alpha_n\}):=\mathfrak{U}-\lim\frac{\tau(Im(x^1_m,\ldots,x_m^n))}{r_m},
\end{equation}
where $Im(x^1_m,\ldots,x_m^n)$ is the set whose elements are the $m$-th
terms of the sequences $(x^i_m)_{m\in\mathbb{N}}$, $i=1,\ldots,n$,
$r_m$ is $m$-th term of normalizing sequence $\tilde{r}$ and
$\mathfrak{U}$ is a nontrivial ultrafilter on $\mathbb{N}$.

\begin{theorem}\label{t3.5}
Let $(X,d,p)$ be a metric space with a marked point $p$ and
$\tau:\mathcal{F}(X)\to\mathbb{R}$ be an extended by Balk metric. If
$\tau$ is compatible with $d$, then for every pretangent space
$(\Omega^X_{p,\tilde{r}},\rho)$ and every nontrivial ultrafilter
$\mathfrak{U}$ on $\mathbb{N}$ the mapping
$$
\mathcal{F}(\Omega^X_{p,\tilde{r}})\ni A\mapsto
\mathcal{X}_{\tau}(A)
$$
is correctly defined  extended by Balk metric which is compatible
with the metric $\rho$.
\end{theorem}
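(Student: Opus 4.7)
The plan is to verify in turn four properties of $\mathcal{X}_\tau$: existence of the $\mathfrak{U}$-limit in \eqref{eq3.3}, its independence of the choice of representatives $\tilde{x}^i \in \alpha_i$, the Balk axioms \eqref{eq1.1} and \eqref{eq1.2}, and the compatibility identity $\mathcal{X}_\tau^2 = \rho$. The central auxiliary tool is a telescoping consequence of \eqref{eq1.2}: for arbitrary $x_1,\ldots,x_n,y_1,\ldots,y_n \in X$,
$$
\bigl|\tau(Im(x_1,\ldots,x_n)) - \tau(Im(y_1,\ldots,y_n))\bigr| \leq \sum_{i=1}^{n} \tau^{2}(x_i,y_i),
$$
obtained by swapping one coordinate at a time, each step applying \eqref{eq1.2} with $B=\{x_i\}$, $C=\{y_i\}$ and $A$ the remaining coordinates; the residual term $\tau(B \cup C)$ equals $\tau^2(x_i,y_i)$ whether or not $x_i = y_i$.

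For the existence of the $\mathfrak{U}$-limit, setting $y_i = x^1_m$ for every $i$ in the telescoping bound yields $\tau(Im(x^1_m,\ldots,x^n_m)) \leq \sum_{i=2}^{n} d(x^1_m,x^i_m)$ since $\tau^2 = d$. Dividing by $r_m$ gives a bounded sequence, because $d(x^1_m,x^i_m)/r_m \to \tilde{d}(\tilde{x}^1,\tilde{x}^i)$ by mutual stability, so property $(i_1)$ provides the $\mathfrak{U}$-limit. Independence of the representatives is then immediate: if $\pi(\tilde{y}^i) = \alpha_i = \pi(\tilde{x}^i)$, then $d(x^i_m,y^i_m)/r_m \to 0$ in the usual sense, and the telescoping bound divided by $r_m$ combined with $(i_2)$ and $(i_4)$ forces the $\mathfrak{U}$-limits for $\tilde{x}^\bullet$ and $\tilde{y}^\bullet$ to coincide.

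For \eqref{eq1.1} the singleton case gives $\mathcal{X}_\tau = 0$ trivially by choosing a common representative. Conversely, if $\alpha_i \neq \alpha_j$ for some $i,j$, applying \eqref{eq1.2} with $A = \{x^i_m\}$, $B = \{x^j_m\}$ and $C = Im(x^1_m,\ldots,x^n_m)$ yields $\tau^{2}(x^i_m,x^j_m) \leq 2\,\tau(Im(x^1_m,\ldots,x^n_m))$, whence $\mathcal{X}_\tau \geq \tfrac12 \rho(\alpha_i,\alpha_j) > 0$. The Balk triangle inequality for $\mathcal{X}_\tau$ is obtained by applying \eqref{eq1.2} to $\tau$ on the $m$-th coordinate sets of representatives of $A \cup B$, $A \cup C$, $B \cup C$, dividing by $r_m$, and invoking monotonicity of $\mathfrak{U}-\lim$ together with the additivity in $(i_3)$. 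Compatibility is direct: for $\alpha \neq \beta$ one has $\tau(Im(x_m,y_m)) = \tau^2(x_m,y_m) = d(x_m,y_m)$, and the ordinary limit $d(x_m,y_m)/r_m \to \rho(\alpha,\beta)$ is preserved by $(i_2)$.

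The main obstacle I expect is extracting the telescoping inequality cleanly from Balk's axiom; once that is in hand, the remainder is a sequence of routine verifications driven by the ultrafilter properties $(i_1)$--$(i_4)$ and by \eqref{eq1.2} applied pointwise in $m$. A minor technical nuisance is that $Im(\cdot)$ collapses coincident coordinates, so the cardinality of $Im(x^1_m,\ldots,x^n_m)$ can vary with $m$; however, this causes no trouble because \eqref{eq1.2} is stated without cardinality restrictions on $A,B,C \in \mathcal{F}(X)$.
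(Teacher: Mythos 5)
Your proposal is correct and follows essentially the same route as the paper: your telescoping bound is exactly the paper's Lemma \ref{l3.6} (inequality \eqref{eq3.5}, proved by the same one-coordinate-at-a-time application of \eqref{eq1.2}), your lower bound $\tau^2(x^i_m,x^j_m)\leqslant 2\,\tau(Im(x^1_m,\ldots,x^n_m))$ is the paper's Lemma \ref{l3.7}, and the remaining verifications via $(i_1)$--$(i_4)$ match the paper's. The only cosmetic differences are your star-shaped sum $\sum_{i\geqslant 2} d(x^1_m,x^i_m)$ for the boundedness estimate in place of the paper's chain sum \eqref{eq3.4}, and your appeal to monotonicity of the $\mathfrak{U}$-limit where the paper invokes the limit-point characterization of Lemma \ref{l3.4*}.
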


To prove this theorem we need the next lemma.

\begin{lemma}\label{l3.6}
Let $(X,d)$ be a nonempty metric space and
$\tau:\mathcal{F}(X)\to\mathbb{R}$ be an extended by Balk metric. If
$\tau$ is compatible with $d$, then the inequalities
\begin{equation}\label{eq3.4}
\tau(\{x_1,x_2,\ldots,x_n\})\leqslant
d(x_1,x_2)+\ldots+d(x_{n-1},x_n),
\end{equation}
and
\begin{equation}\label{eq3.5}
|\tau(\{x_1,\ldots,x_n\})-\tau(\{x'_1,\ldots,x'_n\})|\leqslant
\underset{i=1}{\overset{n}{\sum}}d(x_i,x'_i),
\end{equation}
hold for every integer $n\geqslant 1$. Here $\{x_1,\ldots,x_n\}$ and
$\{x'_1,\ldots,x'_n\}$ are arbitrary $n$-elements subsets of the set
$X$.
\end{lemma}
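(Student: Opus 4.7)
I would prove the two inequalities separately, both relying on the Balk triangle inequality \eqref{eq1.2} as the engine, plus the compatibility $\tau^2=d$.

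For inequality \eqref{eq3.4}, the plan is induction on $n$. The bases $n=1$ and $n=2$ are immediate: $\tau(\{x_1\})=0$ by \eqref{eq1.1}, and $\tau(\{x_1,x_2\})\leqslant d(x_1,x_2)$ follows from compatibility (with equality when $x_1\neq x_2$ and both sides zero otherwise). For the inductive step, I apply \eqref{eq1.2} with $A=\{x_1,\ldots,x_{n-1}\}$, $B=\{x_n\}$, $C=\{x_{n-1}\}$; then $A\cup B=\{x_1,\ldots,x_n\}$, $A\cup C=A$, and $C\cup B=\{x_{n-1},x_n\}$, so
$$
\tau(\{x_1,\ldots,x_n\})\leqslant\tau(\{x_1,\ldots,x_{n-1}\})+\tau(\{x_{n-1},x_n\}).
$$
Applying the induction hypothesis to the first summand and the base case to the second finishes the step.

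For inequality \eqref{eq3.5}, the idea is to walk from $(x_1,\ldots,x_n)$ to $(x_1',\ldots,x_n')$ one coordinate at a time and estimate each one-step change. Define the interpolating sets
$$
Y_k=\{x_1',\ldots,x_k',x_{k+1},\ldots,x_n\},\qquad k=0,1,\ldots,n,
$$
so that $Y_0=\{x_1,\ldots,x_n\}$ and $Y_n=\{x_1',\ldots,x_n'\}$. For each $k$, set $Z_k=\{x_1',\ldots,x_{k-1}',x_{k+1},\ldots,x_n\}$ and apply \eqref{eq1.2} to $A=Z_k$, $B=\{x_k\}$, $C=\{x_k'\}$; this yields
$$
\tau(Y_{k-1})=\tau(Z_k\cup\{x_k\})\leqslant\tau(Z_k\cup\{x_k'\})+\tau(\{x_k',x_k\})=\tau(Y_k)+d(x_k,x_k').
$$
The symmetric application (interchanging $B$ and $C$) gives $\tau(Y_k)\leqslant\tau(Y_{k-1})+d(x_k,x_k')$, hence $|\tau(Y_{k-1})-\tau(Y_k)|\leqslant d(x_k,x_k')$. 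Summing over $k$ from $1$ to $n$ yields \eqref{eq3.5} by the triangle inequality for $|\cdot|$ on $\mathbb{R}$.

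The only technicality to watch is ensuring $Z_k\in\mathcal{F}(X)$ in the telescoping step; this holds as soon as $n\geqslant 2$, while $n=1$ must be handled directly using $\tau(\{x\})=0$ for any singleton, which reduces \eqref{eq3.5} to $0\leqslant d(x_1,x_1')$. No other subtleties arise: the argument uses only \eqref{eq1.1}, \eqref{eq1.2}, and the compatibility identity $\tau(\{x,y\})=d(x,y)$ for $x\neq y$, so the real content is just the clever choice of the one-point set $C$ in the Balk inequality, which is exactly what reduces each coordinate swap to a single distance $d(x_k,x_k')$.
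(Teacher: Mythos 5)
Your proof is correct and follows essentially the same route as the paper: inequality \eqref{eq3.4} by iterating the Balk inequality with $A=\{x_1,\ldots,x_{n-1}\}$, $B=\{x_n\}$, $C=\{x_{n-1}\}$, and inequality \eqref{eq3.5} by a coordinate-by-coordinate telescoping where each swap is bounded via $C=\{x_k'\}$ and compatibility. Your explicit handling of the $n=1$ case (where the intermediate set would be empty) is a small point the paper glosses over with ``without loss of generality $n\geqslant 2$,'' but the argument is otherwise identical.
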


\begin{proof}
Without loss of generality we can suppose that $n\geqslant 2$. Let
$\{x_1,\ldots,x_n\}\in\mathcal{F}(X)$. Using \eqref{eq2.1} with
$B=\{x_n\}$, $C=\{x_{n-1}\}$ and $A=\{x_1,\ldots,x_{n-1}\}$ we find
$$
\tau(\{x_1,\ldots,x_n\})\leqslant\tau(\{x_1,\ldots,x_{n-1}\}) +
\tau(\{x_{n-1},x_n\})\leqslant \tau(\{x_1,\ldots,x_{n-1}\}) +
d(x_{n-1},x_n).
$$
Repeating this procedure we obtain inequality \eqref{eq3.4}.

Let us check \eqref{eq3.5}. To this end note that
$$
\begin{gathered}
|\tau(\{x_1,\ldots,x_n\})-\tau(\{x'_1,\ldots,x'_n\})|
\\
\leqslant |\tau(Im(x_1,x_2\ldots,x_n))-\tau(Im(x'_1,x_2\ldots,x_n))|
\\
+ |\tau(Im(x'_1,x_2\ldots,x_n))-\tau(Im(x'_1,x'_2,x_3\ldots,x_n))|
\ldots \\
+
|\tau(Im(x'_1,x'_2\ldots,x'_{n-1},x_n))-\tau(Im(x'_1,\ldots,x'_{n-1},x'_n))|.
\end{gathered}
$$
The sets, which are the arguments of the function $\tau$ under the
signs of the absolute value on the right-hand side of the last
inequality, differ from each other by no more than one element.
Therefore, it suffices to verify the inequality
\begin{equation}\label{eq3.6}
|\tau(Im(x'_1,\ldots,x'_{n-1},x_n))-\tau(Im(x'_1,\ldots,x'_{n-1},x'_n))|
\leqslant d(x_n,x'_n).
\end{equation}
\noindent Without loss of generality we can suppose that
\begin{equation}\label{eq3.7}
\tau(Im(x'_1,\ldots,x'_{n-1},x_n))\geqslant\tau(Im(x'_1,\ldots,x'_{n-1},x'_n)).
\end{equation}
\noindent Using inequality \eqref{eq1.2} with
$A=\{x'_1,\ldots,x'_{n-1}\}$, $B=\{x_n\}$, $C=\{x'_n\}$ we get
$$
A\cup B=Im(x'_1,\ldots,x'_{n-1},x_n),\quad A\cup
C=Im(x'_1,\ldots,x'_{n-1},x'_n), \quad B\cup C=Im(x_n,x'_n)
$$
and
$$
\tau(Im(x'_1,\ldots,x'_{n-1},x_n))-\tau(Im(x'_1,\ldots,x'_{n-1},x'_n))
\leqslant\tau(\{x_n,x'_n\})=d(x_n,x'_n).
$$
The last inequality together with \eqref{eq3.7} gives \eqref{eq3.6}.
\end{proof}

\begin{lemma}\label{l3.7}
Let $(X,d)$ be a nonempty metric space, let
$\tau:\mathcal{F}(X)\to\mathbb{R}$ be an extended by Balk metric and
let $K\in\mathcal{F}(X)$. If $\tau$ is compatible with $d$, then the
inequality
\begin{equation}\label{eq3.9*}
\tau(K)\geqslant\frac{1}{2}d(x,y)
\end{equation}
holds for all $x,y\in K$.
\end{lemma}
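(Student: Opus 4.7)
The plan is to apply the Balk triangle inequality \eqref{eq1.2} with a carefully chosen triple $(A,B,C)$ of elements of $\mathcal{F}(X)$ that collapses the right-hand side to $2\tau(K)$ while the left-hand side produces $d(x,y)$.

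Concretely, I would take $A=\{x\}$, $B=\{y\}$, and $C=K$. Since $x,y\in K$ by hypothesis, both unions $A\cup C$ and $B\cup C$ equal $K$, so \eqref{eq1.2} becomes
\[
\tau(\{x\}\cup\{y\})\leqslant \tau(K)+\tau(K)=2\tau(K).
\]
It remains to read off the left-hand side. If $x=y$, then $\{x\}\cup\{y\}=\{x\}$ has cardinality one, so by \eqref{eq1.1} the left-hand side is $0$, and $d(x,y)=0$ as well, giving \eqref{eq3.9*} trivially. If $x\neq y$, then by definition \eqref{eq2.1} of $\tau^2$ and the compatibility of $\tau$ with $d$ we have $\tau(\{x,y\})=\tau^2(x,y)=d(x,y)$, and dividing through by $2$ yields precisely $\tau(K)\geqslant \frac{1}{2}d(x,y)$.

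There is no real obstacle here beyond spotting the substitution $C=K$; the lemma is essentially an immediate consequence of the Balk triangle inequality together with compatibility, and no induction on $|K|$ or auxiliary constructions are needed.
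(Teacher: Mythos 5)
Your proposal is correct and uses exactly the same substitution as the paper: $A=\{x\}$, $B=\{y\}$, $C=K$ in inequality \eqref{eq1.2}, with $A\cup C=B\cup C=K$ and compatibility giving $\tau(\{x,y\})=d(x,y)$. Your explicit handling of the case $x=y$ is a small extra care the paper's proof glosses over, but the argument is identical in substance.
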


\begin{proof}
Let $A=\{x\}$, $B=\{y\}$ and $C=K$. Then by inequality \eqref{eq1.2}
we have
$$
d(x,y)=\tau(A\cup B)\leqslant\tau(A\cup K) + \tau(B\cup K)=2\tau(K).
$$
\end{proof}

\begin{proof}[\it The proof of the Theorem \ref{t3.5}.] Let us check
the  existence of the finite $\mathfrak{U}$-limit on the right-hand
side of \eqref{eq3.3}. According to property $(i_1)$ of the
ultrafilters it suffices to prove the inequality
\begin{equation}\label{eq3.9}
\underset{m\to\infty}{\lim\sup}\,\frac{\tau(Im(x^1_m,\ldots,x_m^n))}{r_m}
< \infty.
\end{equation}

Using \eqref{eq3.1}, \eqref{eq3.2} and \eqref{eq3.4} we find
$$
\begin{gathered}
\underset{m\to\infty}{\lim\sup}\,\frac{\tau(Im(x^1_m,\ldots,x_m^n))}{r_m}\leqslant
\underset{m\to\infty}{\lim}\,\frac{d(x^1_m,x^2_m)}{r_m}
\\ + \underset{m\to\infty}{\lim}\,\frac{d(x^2_m,x^3_m)}{r_m} + \ldots +
\underset{m\to\infty}{\lim}\,\frac{d(x^{n-1}_m,x^n_m)}{r_m} =
\underset{i=1}{\overset{n-1}{\sum}}\rho(\alpha_i, \alpha_{i+1}).
\end{gathered}
$$
Inequality \eqref{eq3.9} follows.

Let us make sure that the value of
$\mathcal{X}_{\tau}(\{\alpha_1,\ldots,\alpha_n\})$ given in formula
\eqref{eq3.3} does not depend of the choice of $\tilde{x}^i$,
$i=1,\ldots,n$. Let $\tilde{y}^i=(y^i_m)$ be some elements of the
set $\tilde{X}_{p,\tilde{r}}$ such that
$\pi(\tilde{y}^i)=\pi(\tilde{x}^i)=\alpha_i$, $i=1,\ldots,n$.

By \eqref{eq3.5} we have
$$
\begin{gathered}
\underset{m\to\infty}{\lim\sup}\,\frac{\tau(Im(x^1_m,\ldots,x_m^n))-\tau(Im(y^1_m,\ldots,y_m^n))}{r_m}
\\\leqslant
\underset{i=1}{\overset{n}{\sum}}
\left(\underset{m\to\infty}{\lim}\,\frac{d(x^i_m,y^i_m)}{r_m}\right)=
\underset{i=1}{\overset{n}{\sum}}\rho(\alpha_i,\alpha_i)=0.
\end{gathered}
$$The wanted independence follows from $(i_4)$.

Let us verify that
$\mathcal{X}_{\tau}:\mathcal{F}(\Omega^X_{p,\tilde{r}})\to\mathbb{R}$
has the characteristic properties of extended metric i.e.,

\begin{equation}\label{eq3.10}
(\mathcal{X}_{\tau}(A)=0) \Leftrightarrow (|A|=1)
\end{equation}
and
\begin{equation}\label{eq3.11}
\mathcal{X}_{\tau}(A\cup B)\leqslant \mathcal{X}_{\tau}(A\cup C) +
\mathcal{X}_{\tau}(B\cup C)
\end{equation}
hold for all $A,B,C\in\mathcal{F}(\Omega^X_{p,\tilde{r}})$.

Let $|A|=1$. Then we have $A=\{\alpha\}$ for some
$\alpha\in\Omega^X_{p,\tilde{r}}$. If
$\tilde{x}=(x_m)_{m\in\mathbb{N}}\in\tilde{X}_{p,\tilde{r}}$ and
$\pi(\tilde{x})=\alpha$, then \eqref{eq3.3} and property $(i_2)$ of
the ultrafilters imply
$$
\mathcal{X}_{\tau}(A)=\mathfrak{U}-\lim\frac{\tau(\{x_m\})}{r_m}=\mathfrak{U}-\lim
0 = 0.
$$
Suppose now that $A$ has at least two distinct points
$\alpha_1=\pi((x^1_m)_{m\in\mathbb{N}})$ and
$\alpha_2=\pi((x^2_m)_{m\in\mathbb{N}})$ where
$(x^1_m)_{m\in\mathbb{N}},(x^2_m)_{m\in\mathbb{N}}\in\tilde{X}_{p,\tilde{r}}$.
Then inequality \eqref{eq3.9*} implies
$$
\frac{\tau(Im(x^1_m,\ldots,x^n_m))}{r_m}\geqslant\frac{1}{2}\,\frac{d(x^1_m,x^2_m)}{r_m}
$$
for every $m\in\mathbb{N}$. By the definition we have
$$
\underset{m\to\infty}{\lim}\,\frac{1}{2}\,\frac{d(x^1_m,x^2_m)}{r_m}
= \frac{1}{2}\rho(\alpha_1,\alpha_2)>0.
$$

\noindent Hence all limit points of the sequence with the common
term
$$
\frac{\tau(Im(x^1_m,\ldots,x^n_m))}{r_m}
$$
are positive. Therefore, by Lemma \ref{l3.4*}, we obtain the strict inequality
$\mathcal{X}_{\tau}(A)>0$. Equivalence \eqref{eq3.10} is proved.

Similarly, considering the limit points of the sequence that defines
the value
$$
(\mathcal{X}_{\tau}(A\cup B) - \mathcal{X}_{\tau}(A\cup C) -
\mathcal{X}_{\tau}(B\cup C))
$$
and using \eqref{eq1.2} and $(i_3)$ we obtain \eqref{eq3.11}. Thus
$\mathcal{X}_{\tau}$ is an extended by Balk metric on
$\Omega^X_{p,\tilde{r}}$.

To complete the proof it remains to check that $\mathcal{X}_{\tau}$
is compatible with $\rho$. Let
$\alpha_1,\alpha_2\in\Omega^X_{p,\tilde{r}}$ and
$\alpha_1=\pi((x^1_m)_{m\in\mathbb{N}}),
\alpha_2=\pi((x^2_m)_{m\in\mathbb{N}})$ where
$(x^1_m)_{m\in\mathbb{N}},(x^2_m)_{m\in\mathbb{N}}\in\tilde{X}_{p,\tilde{r}}$.
Then from \eqref{eq3.1}, \eqref{eq3.2}, \eqref{eq3.3} and the fact
that $\tau$ is compatible with $d$ we find
$$
\begin{gathered}
\mathcal{X}_{\tau}(\{\alpha_1,\alpha_2\})=
\mathfrak{U}-\lim\frac{\tau(Im(x^1_m,x^2_m))}{r_m}
\\
=\mathfrak{U}-\lim\frac{d(x^1_m,x^2_m)}{r_m}=
\underset{m\to\infty}{\lim}\frac{d(x^1_m,x^2_m)}{r_m}=\rho(\alpha_1,\alpha_2),
\end{gathered}
$$
which is what had to be proved.
\end{proof}

It is rather easy to show if an extended by Balk metric
$\tau:\mathcal{F}(X)\to\mathbb{R}$ is generated by a metric
$d:X^2\to\mathbb{R}$, then for all pretangent spaces
$(\Omega^X_{p,\tilde{r}},\rho)$ and nontrivial ultrafilters
$\mathfrak{U}$ the extended metrics $\mathcal{X}_{\tau}$ are
generated by $\rho$. On the other hand if the space $(X,d)$ is
discrete, then every pretangent space $\Omega^X_{p,\tilde{r}}$ is
single-point. Consequently $\mathcal{X}_{\tau}$ is generated by the
metric $\rho$ as the extended by Balk metric on the single-point
space, irrespective of whether $\tau$ is generated by the metric $d$
or not.

To describe the class of extended metrics $\tau:X\times
X\to\mathbb{R}$ for which $\mathcal{X}_{\tau}$ is generated by $\rho$,
we need some "infinitesimal" variant of Definition
\ref{def1.2}.

Let $(X,d)$ be a metric space, $p\in X$ and
$\tau:\mathcal{F}(X)\to\mathbb{R}$ be an extended metric for which
$\tau^{2}=d$, i.e. $\tau$ is compatible with $d$.

\begin{definition}\label{def3.8}
The extended metric $\tau$ is generated by the metric $d$ at the
point $p$ if for every $n\in\mathbb{N}$ and every finite set of
sequences $(x^1_m)_{m\in\mathbb{N}},\ldots,
(x^n_m)_{m\in\mathbb{N}}$ which converge to $p$ with $m\to\infty$,
the relation
\begin{equation}\label{eq3.13}
|\tau(Im(x^1_m,\ldots,x^n_m))-\diam_d(Im(x^1_m,\ldots,x^n_m))| =
o(\max\{d(x^1_m,p),\ldots,d(x^n_m,p)\})
\end{equation}
holds, where $Im(x^1_m,\ldots,x^n_m)$ is defined by \eqref{eq1.6}.
\end{definition}

\begin{remark}\label{r3.9}
Relation \eqref{eq3.13} means that
\begin{equation}\label{eq3.14}
\underset{m\to\infty}{\lim}\frac{|\tau(Im(x^1_m,\ldots,x^n_m))-\diam_d(Im(x^1_m,\ldots,x^n_m))|}
{\max\{d(x^1_m,p),\ldots,d(x^n_m,p)\}}=0,
\end{equation}
with
$\underset{m\to\infty}{\lim}\frac{|\tau(Im(x^1_m,\ldots,x^n_m))-\diam_d(Im(x^1_m,\ldots,x^n_m))|}
{\max\{d(x^1_m,p),\ldots,d(x^n_m,p)\}}:=0$ for
$x^1_m=\ldots=x^n_m=p$.
\end{remark}

\begin{theorem}\label{t3.10}
Let $X\neq\varnothing$, $p\in X$ and let
$\tau:\mathcal{F}(X)\to\mathbb{R}$ be an extended by Balk metric.
Then the following statements are equivalent.
\begin{itemize}
\item[$(i)$] For every nontrivial ultrafilter $\mathfrak{U}$
on $\mathbb{N}$ and every space $(\Omega^X_{p,\tilde{r}},\rho)$
which is pretangent to the metric space $(X,\tau^2)$ at the point
$p$, the extended metric
$\mathcal{X}_{\tau}:\mathcal{F}(\Omega^X_{p,\tilde{r}})\to\mathbb{R}$
is generated by $\rho$.

\item[$(ii)$] The extended metric $\tau$ is generated by the metric $\tau^2$ at the point $p$.
\end{itemize}
\end{theorem}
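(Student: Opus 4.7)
The plan is to prove the two implications separately, both reducing to an analysis of the sequence $\tau(Im(x^1_m,\ldots,x^n_m))/r_m$ and its comparison with $\diam_d(Im(x^1_m,\ldots,x^n_m))/r_m$, where $d:=\tau^2$. For $(ii)\Rightarrow(i)$, I would fix an arbitrary pretangent space $(\Omega^X_{p,\tilde{r}},\rho)$, a nontrivial ultrafilter $\mathfrak{U}$, and a set $A=\{\alpha_1,\ldots,\alpha_n\}\in\mathcal{F}(\Omega^X_{p,\tilde{r}})$ with representatives $\tilde{x}^i=(x^i_m)_{m\in\mathbb{N}}$. Since each $\alpha_i$ is at finite $\rho$-distance from $\pi(\tilde{p})$, the ratios $d(x^i_m,p)/r_m$ are bounded, and hence so is $M_m:=\max_i d(x^i_m,p)/r_m$. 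Assumption \eqref{eq3.14} then gives $\tau(Im)/r_m-\diam_d(Im)/r_m=o(1)\cdot M_m=o(1)$ in the ordinary sense. Mutual stability yields $d(x^i_m,x^j_m)/r_m\to\rho(\alpha_i,\alpha_j)$, and since $\max$ over the finite index set $\{1,\ldots,n\}^2$ is continuous, $\diam_d(Im(x^1_m,\ldots,x^n_m))/r_m\to\diam_\rho A$ ordinarily. Hence $\tau(Im)/r_m$ also has an ordinary limit equal to $\diam_\rho A$, and property $(i_2)$ of ultrafilters gives $\mathcal{X}_\tau(A)=\diam_\rho A$.

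For the converse I would argue by contrapositive. Suppose $(ii)$ fails: there exist $n\geq 1$ and sequences $(x^i_m)_{m\in\mathbb{N}}$ tending to $p$ for which \eqref{eq3.14} does not hold. By extracting an initial subsequence I may assume $|\tau(Im(x^1_m,\ldots,x^n_m))-\diam_d(Im(x^1_m,\ldots,x^n_m))|\geq c\cdot\max_i d(x^i_m,p)$ for all $m$ with some fixed $c>0$; the case where this maximum vanishes makes \eqref{eq3.13} trivial and may be discarded. Setting $r_m:=\max_i d(x^i_m,p)>0$, I then apply Bolzano--Weierstrass (the ratios $d(x^i_m,x^j_m)/r_m$ and $d(x^i_m,p)/r_m$ are bounded by $2$ and $1$) to thin the subsequence further so that all of them converge in the usual sense; note automatically $r_m\to 0$. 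With $\tilde{r}:=(r_m)$ as normalizing sequence, $\{\tilde{p},\tilde{x}^1,\ldots,\tilde{x}^n\}$ is self-stable, and Proposition \ref{pr3.2} extends it to a maximal self-stable $\tilde{X}_{p,\tilde{r}}$, producing a pretangent space containing the points $\alpha_i:=\pi(\tilde{x}^i)$.

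The key remaining step is the ultrafilter selection. Along the constructed subsequence, $\diam_d(Im)/r_m\to\diam_\rho A$ ordinarily (as in the first part), while $\tau(Im)/r_m$ is bounded by Lemma \ref{l3.6} and the convergence of all pairwise distance ratios; moreover $|\tau(Im)/r_m-\diam_d(Im)/r_m|\geq c$ for every $m$. Therefore every limit point $t^*$ of $\tau(Im)/r_m$ satisfies $|t^*-\diam_\rho A|\geq c$, and Lemma \ref{l3.4*} supplies a nontrivial ultrafilter $\mathfrak{U}$ with $\mathfrak{U}-\lim\tau(Im)/r_m=t^*$, giving $\mathcal{X}_\tau(\{\alpha_1,\ldots,\alpha_n\})=t^*\neq\diam_\rho A$ and contradicting $(i)$. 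The principal obstacle is bookkeeping rather than conceptual: one must weave together several successive extractions so that distance ratios converge in the usual sense while the $\tau$-ratio preserves the quantitative defect; the actual passage from a limit point to a value of $\mathcal{X}_\tau$ is exactly what Lemma \ref{l3.4*} is designed to perform.
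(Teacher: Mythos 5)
Your proposal is correct and follows essentially the same route as the paper: the forward direction combines \eqref{eq3.14} with the boundedness of $\max_i d(x^i_m,p)/r_m$ and property $(i_2)$ of ultrafilters, and the converse builds the same normalizing sequence $r_m=\max_i d(x^i_m,p)$ and the same pretangent space from a subsequence witnessing the failure of \eqref{eq3.14}. The only (harmless) variation is that you retain just the lower bound $|y_m|\geqslant c$ and invoke Lemma \ref{l3.4*} to select an ultrafilter realizing a limit point, whereas the paper first extracts a subsequence along which $y_m$ converges to some $b\neq 0$, so that any nontrivial ultrafilter works via $(i_2)$.
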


\begin{proof}
For convenience write $d:=\tau^2$.

First consider the case when $p$ is an isolated point of the space
$(X,d)$. In this case the equality $x_m=p$ holds for every sequence
$(x_m)_{m\in\mathbb{N}}\in\tilde{X}_p$ if $m\in\mathbb{N}$ is
sufficiently large. Using Remark \ref{r3.9} we see that $\tau$ is
generated by $d$ at the point $p$. For isolated $p$ any
$\Omega^X_{p,\tilde{r}}$ is a single-point space. Property $(ii)$
holds. To prove $(i)$ observe, that for the single-point space there
is the unique extended metric which is generated by the unique
metric on such space.

Let us turn now to less trivial case when $p$ is a limit point of
$X$. Suppose $(ii)$ holds. Consider an arbitrary pretangent space
$(\Omega^X_{p,\tilde{r}},\rho)$, a nontrivial ultrafilter
$\mathfrak{U}$ on $\mathbb{N}$ and the extended metric
$\mathcal{X}_{\tau}:\mathcal{F}(\Omega^X_{p,\tilde{r}})\to\mathbb{R}$
which is defined by $\mathfrak{U}$ according to \eqref{eq3.3}.

We must prove the equality
\begin{equation}\label{eq3.15}
\mathcal{X}_{\tau}(A)=\diam_{\rho}A
\end{equation}
for arbitrary
$A=\{\alpha_1,\ldots,\alpha_n\}\in\mathcal{F}(\Omega^X_{p,\tilde{r}})$.

By Theorem \ref{t3.5}, $\mathcal{X}_{\tau}$ is compatible with
$\rho$. Therefore \eqref{eq3.15} holds for $n\leqslant 2$. So we can
assume $n\geqslant 3$. Let $\tilde{X}_{p,\tilde{r}}$ be a maximal
self-stable family in $\tilde{X}_p$ corresponding to
$\Omega^X_{p,\tilde{r}}$ and let
$\pi:\tilde{X}_{p,\tilde{r}}\to\Omega^X_{p,\tilde{r}}$ be the
projection that maps the sequences $(x_m)_{m\in\mathbb{N}}\in
\tilde{X}_{p,\tilde{r}}$ to their equivalence classes (see
\eqref{eq3.2}). Relation \eqref{eq3.15} can be rewritten as
\begin{equation}\label{eq3.16}
\mathfrak{U}-\lim\frac{\tau(Im(x^1_m,\ldots,x^n_m))}{r_m}=\diam_{\rho}A,
\end{equation}
where $r_m$ and $x^1_m,\ldots,x^n_m$ are the $m$-th elements of the
normalizing sequence $\tilde{r}=(r_m)_{m\in\mathbb{N}}$ and,
respectively, of the sequences
$(x^1_m)_{m\in\mathbb{N}},\ldots,(x^n_m)_{m\in\mathbb{N}}\in\tilde{X}_{p,\tilde{r}}$
for which $\pi((x^i_m))=\alpha_i$, $i=1,\ldots,n$. The following
limit relations directly follow from the definition of the metric
$\rho$ on $\Omega^X_{p,\tilde{r}}$,

\begin{equation}\label{eq3.16*}
\begin{gathered}
\diam_{\rho}A=\underset{m\to\infty}{\lim}\frac{\diam_d(Im(x^1_m,\ldots,x^n_m))}{r_m},\\
\max(\{\rho(\alpha,\alpha_1),\ldots,\rho(\alpha,\alpha_n)\})=
\underset{m\to\infty}{\lim}\frac{\max(\{d(p,x^1_m),\ldots,d(p,x^n_m)\})}{r_m},
\end{gathered}
\end{equation}
where $\alpha=\pi(\tilde{p})$, $\tilde{p}=(p,p,p\ldots)$. Using
properties $(i_2)$, $(i_3)$ of the ultrafilters, equalities
\eqref{eq3.2} and \eqref{eq3.3} and the first equality from
\eqref{eq3.16*}, we can rewrite \eqref{eq3.16} in the form
\begin{equation}\label{eq3.17}
\mathfrak{U}-{\lim}\frac{\tau(Im(x^1_m,\ldots,x^n_m))-\diam_d(Im(x^1_m,\ldots,x^n_m))}
{r_m}=0.
\end{equation}
Since $n\geqslant 3$, then the strict inequality
$$
\max\{d(x^1_m,p),\ldots,d(x^n_m,p)\}>0
$$
holds for sufficient large $m$. Using this inequality, \eqref{eq3.14} and the second equality from
\eqref{eq3.16*} we find
$$
\begin{gathered}
\underset{m\to\infty}{\lim}\frac{\tau(Im(x^1_m,\ldots,x^n_m))-\diam_d(Im(x^1_m,\ldots,x^n_m))}
{r_m}\\
=\underset{m\to\infty}{\lim}\left(\frac{\tau(Im(x^1_m,\ldots,x^n_m))-\diam_d(Im(x^1_m,\ldots,x^n_m))}
{\max\{d(x^1_m,p),\ldots,d(x^n_m,p)\}}\,\frac{\max\{d(x^1_m,p),\ldots,d(x^n_m,p)\}}{r_m}\right)
\\
=0\cdot\max\{\rho(\alpha,\alpha_1),\ldots,\rho(\alpha,\alpha_n)\}=0.
\end{gathered}
$$
Now property $(i_2)$ of the ultrafilters implies \eqref{eq3.17}.
The implication $(ii)\Rightarrow(i)$ is proved.

To complete the proof it remains to establish the
converse implication $(i)\Rightarrow(ii)$. Suppose that $(i)$ is
true but $(ii)$ is false. Then there are an integer number
$n\geqslant 3$ and sequences
$(x^i_m)_{m\in\mathbb{N}}\in\tilde{X}_p$, $i=1,\ldots,n$ such that a
limit point $b$ of the sequence $(y_m)_{m\in\mathbb{N}}$,
$$
y_m=\frac{\tau(Im(x^1_m,\ldots,x^n_m))-\diam_d(Im(x^1_m,\ldots,x^n_m))}
{\max\{d(x^1_m,p),\ldots,d(x^n_m,p)\}},
$$
is nonzero, $b\neq 0$. The sequence $(y_m)_{m\in\mathbb{N}}$ is
bounded. Indeed, if
$\diam_d(Im(x^1_m,\ldots,x^n_m))=d(x_m^{i_1},x_m^{i_2})$,
$1\leqslant i_1, i_2\leqslant n$, then
\begin{equation}\label{eq3.18}
0\leqslant\frac{\diam_d(Im(x^1_m,\ldots,x^n_m))}
{\max\{d(x^1_m,p),\ldots,d(x^n_m,p)\}}\leqslant
\frac{d(x_m^{i_1},p)+d(x_m^{i_2},p)}
{\max\{d(x^1_m,p),\ldots,d(x^n_m,p)\}}\leqslant 2.
\end{equation}

Similarly, using \eqref{eq3.4} we find
$$
\tau(Im(x^1_m,\ldots,x^n_m))\leqslant\underset{i=1}{\overset{n-1}{\sum}}\tau(Im(x^i_m,x^{i+1}_m))
=\underset{i=1}{\overset{n-1}{\sum}}d(x^i_m,x^{i+1}_m),
$$
that gives
\begin{equation}\label{eq3.19}
\begin{gathered}
0\leqslant\frac{\tau(Im(x^1_m,\ldots,x^n_m))}{\max\{d(x^1_m,p),\ldots,d(x^n_m,p)\}}\leqslant
\underset{i=1}{\overset{n-1}{\sum}}\frac{d(x^i_m,x^{i+1}_m)}{\max\{d(x^1_m,p),\ldots,d(x^n_m,p)\}}
\\\leqslant
\underset{i=1}{\overset{n-1}{\sum}}\frac{d(x^i_m,p)+d(x^{i+1}_m,p)}{\max\{d(x^1_m,p),\ldots,d(x^n_m,p)\}}
\leqslant 2(n-1).
\end{gathered}
\end{equation}

Inequalities \eqref{eq3.18} and \eqref{eq3.19} imply the desirable boundedness. Passing from
$(y_m)_{m\in\mathbb{N}}$ to a suitable subsequence of $(y_m)_{m\in\mathbb{N}}$ it can be assumed
that
\begin{equation}\label{eq3.20}
\underset{m\to\infty}{\lim}y_m=b\quad\text{and}\quad
b\notin\{0,+\infty,-\infty\}.
\end{equation}
Moreover, using the conditions
$(x^i_m)_{m\in\mathbb{N}}\in\tilde{X}_p$, $i=1,\ldots,n$ and passing
to a subsequence again we can assume $\lim_{m\to\infty}\max\{d(x^1_m,p),\ldots,d(x^n_m,p)\}=0$ and
$\max\{d(x^1_m,p),\ldots,d(x^n_m,p)\}>0$ for $m\in\mathbb{N}$.

Thus the sequence $(r_m)_{m\in\mathbb{N}}$ with
$r_m=\max\{d(x^1_m,p),\ldots,d(x^n_m,p)\}$ can be selected as
normalizing. Using the obvious inequalities
$$
\frac{d(x^i_m,x^j_m)}{r_m}\leqslant 2\qquad\text{and}\qquad
\frac{d(x^i_m,p)}{r_m}\leqslant 1
$$
and passing to a subsequence again we can assume that the sequences
$\tilde{p},\tilde{x}_1=(x^1_m)_{m\in\mathbb{N}},\ldots,\tilde{x}_n=(x^n_m)_{m\in\mathbb{N}}$
are mutually stable. Let $\tilde{X}_{p,\tilde{r}}$ be a maximal
self-stable family for which
$\tilde{x}_i\in\tilde{X}_{p,\tilde{r}}$, $i=1,\ldots,n$ and
$\Omega^X_{p,\tilde{r}}$ be the corresponding pretangent space. Let
$\mathfrak{U}$ be a nontrivial ultrafilter on $\mathbb{N}$. Denote
by $\alpha_i$ the image of subsequence
$\tilde{x}_i=(x^i_m)_{m\in\mathbb{N}}$ under the projection of
$\tilde{X}_{p,\tilde{r}}$ on $\Omega^X_{p,\tilde{r}}$,
$\alpha_i=\pi(\tilde{x}_i)$. Now using properties $(i_2)$--$(i_3)$
and equality \eqref{eq3.20} we obtain
$$
\begin{gathered}
b=\mathfrak{U}-\lim\frac{\tau(Im(x^1_m,\ldots,x^n_m))-\diam_d(Im(x^1_m,\ldots,x^n_m))}
{r_m}\\=
\left(\mathfrak{U}-\lim\frac{\tau(Im(x^1_m,\ldots,x^n_m))}{r_m}\right)-
\left(\mathfrak{U}-\lim\frac{\diam_d(Im(x^1_m,\ldots,x^n_m))}{r_m}\right) \\
=\mathcal{X}_{\tau}(\{\alpha_1,\ldots,\alpha_n\})-
\diam_{\rho}(\{\alpha_1,\ldots,\alpha_n\}).
\end{gathered}
$$
Since $b\neq 0$ it implies the relation
$$
\mathcal{X}_{\tau}(\{\alpha_1,\ldots,\alpha_n\})\neq
\diam_{\rho}(\{\alpha_1,\ldots,\alpha_n\}),
$$
contrary to $(i)$.

The implication $(i)\Rightarrow(ii)$ follows.
\end{proof}

Theorem \ref{t3.10} and some known results about pretangent spaces
allow, in some cases, to get the relatively simple answer to the
question about infinitesimal structure of extended metrics
$\tau:\mathcal{F}(X)\to\mathbb{R}$ for which the corresponding
extended metrics $\mathcal{X}_{\tau}$ on pretangent spaces are
generated by metrics with some special properties.

Recall that a metric space $(X,d)$ is called {\it ultrametric} if
the inequality
$$
d(x,y) \leqslant d(x,z) \vee d(z,y)
$$
holds for all $x,y,z\in X$. Here and in the sequel we set $p\vee
q=\max\{p,q\}$ and $p\wedge q\ = \min\{p,q\}$ for all
$p,q\in\mathbb{R}$.

Let $(X,d)$ be a metric spaces with a marked point $p$. Let us
define a function $F_d:X^3\to\mathbb{R}$ as
$$
F_d(x,y):=\begin{cases} \frac{d(x,y)(d(x,p)\wedge
d(y,p))}{(d(x,p)\vee d(y,p))^2} \qquad\text{if}\qquad
(x,y)\neq(p,p)\\
0\qquad\qquad\qquad\qquad\text{ if}\qquad (x,y)=(p,p)
\end{cases}
$$
and a function $\Phi_d:X^3\to\mathbb{R}$ as
$\Phi_d(x,y,z):=F_d(x,y)\vee F_d(x,z)\vee F_d(y,z)$ for every
$(x,y,z)\in X^3$. For convenience we introduce the notations:
$d_1(x,y,z)$ is length of greatest side of the triangle with the
sides $d(x,y)$, $d(x,z)$ and $d(y,z)$ and $d_2(x,y,z)$ is length of
greatest of the two remained sides of this triangle.

\begin{lemma}\cite{DD}\label{l3.13}
Let $(X,d)$ be a metric space with a marked point $p$. All
pretangent spaces  $\Omega_{p,\tilde{r}}^X$ are ultrametric if and
only if
\begin{equation}\label{eq3.21}
\underset{x,y,z\rightarrow p}{\lim}
\Phi_d(x,y,z)\left(\frac{d_1(x,y,z)}{d_2(x,y,z)}-1\right)=0,
\end{equation}
where $\frac{d_1(x,y,z)}{d_2(x,y,z)}:=1$ for $d_2(x,y,z)=0$.
\end{lemma}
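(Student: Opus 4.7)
The plan is to rephrase condition \eqref{eq3.21} as a scale-invariant non-degeneracy statement on triangles near $p$, then match it directly against the ultrametric inequality in the pretangent space. The first observation I would make is that $F_d$, and hence $\Phi_d$, is invariant under rescaling $d$ by a positive scalar, since the numerator and denominator of $F_d$ are both homogeneous of degree two in $d$. Consequently, for any normalizing sequence $\tilde r = (r_m)$ and any sequences $\tilde x,\tilde y,\tilde z \to p$, the value $\Phi_d(x_m,y_m,z_m)$ is a function of the normalized quantities $d(x_m,p)/r_m$, $d(y_m,p)/r_m$, $d(z_m,p)/r_m$, $d(x_m,y_m)/r_m$, $d(x_m,z_m)/r_m$, $d(y_m,z_m)/r_m$, and $\Phi_d$ is bounded away from $0$ along a subsequence precisely when the corresponding three classes in $\Omega^X_{p,\tilde r}$ are pairwise distinct and all distinct from $\pi(\tilde p)$.

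For the sufficiency direction, assume \eqref{eq3.21} and fix $\alpha,\beta,\gamma \in \Omega^X_{p,\tilde r}$ with representatives $\tilde x,\tilde y,\tilde z$. I want to verify $\rho(\alpha,\beta) \leq \rho(\alpha,\gamma) \vee \rho(\beta,\gamma)$, which after dividing by $r_m$ amounts to the equality $\lim d_1(x_m,y_m,z_m)/r_m = \lim d_2(x_m,y_m,z_m)/r_m$. If any pair among $\{\alpha,\beta,\gamma,\pi(\tilde p)\}$ coincides, the inequality reduces to the triangle inequality for $\rho$ supplied by Theorem~\ref{t3.5}. Otherwise the preceding paragraph gives $\Phi_d(x_m,y_m,z_m) \geq c > 0$ eventually, so \eqref{eq3.21} forces $d_1/d_2 \to 1$; multiplying by the bounded factor $d_2/r_m$ yields the desired equality of limits.

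For necessity, suppose \eqref{eq3.21} fails, so that there exist $\varepsilon > 0$ and sequences $(x_m),(y_m),(z_m) \to p$ with $\Phi_d(x_m,y_m,z_m)\bigl(d_1/d_2 - 1\bigr) \geq \varepsilon$ along a subsequence. Since $\Phi_d$ and $d_1/d_2 - 1$ are bounded above (by $2$ and $1$ respectively), passing to a further subsequence I may assume both factors are bounded below by positive constants. Take $r_m := \max\{d(x_m,p),d(y_m,p),d(z_m,p)\}$, extract yet another subsequence so that every normalized pairwise distance converges, and invoke Proposition~\ref{pr3.2} to embed $(x_m),(y_m),(z_m)$ into a maximal self-stable family $\tilde X_{p,\tilde r}$. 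Let $\alpha,\beta,\gamma$ be the resulting classes. The lower bound on $\Phi_d$ rules out pretangent degeneracy, while $d_1/d_2 \to L > 1$ yields $\lim d_1/r_m > \lim d_2/r_m$, i.e.\ a triangle in $\Omega^X_{p,\tilde r}$ violating the ultrametric inequality.

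The main obstacle is the case split in the sufficiency direction when $\Phi_d \to 0$: one must distinguish whether a pretangent point collapses onto $\pi(\tilde p)$ or whether two pretangent points merge, and in each subcase verify that the ultrametric inequality follows already from the triangle inequality for $\rho$. The scale-invariance of $F_d$ is the structural fact that makes these degeneracies exactly match the vanishing of $\Phi_d$, thereby certifying that the weighted expression in \eqref{eq3.21} is the correct test: the weight $\Phi_d$ neutralizes exactly those triangles that do not produce a genuine pretangent triangle.
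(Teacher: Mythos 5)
First, a point of comparison: the paper does not prove Lemma \ref{l3.13} at all --- it is imported verbatim from \cite{DD} --- so your proposal can only be measured against the statement itself. Your architecture (degree-zero homogeneity of $F_d$, reading $\Phi_d$ as a weight that neutralizes degenerate triangles, taking $r_m=\max_i d(x^i_m,p)$ as the normalizing sequence in the necessity direction) is the right one, but the pivot of the argument --- the claim that $\Phi_d(x_m,y_m,z_m)$ is bounded away from $0$ precisely when the three classes are pairwise distinct and all distinct from $\pi(\tilde p)$ --- is false in both directions, and each half of your proof leans on the false half at a critical moment. Because $F_d$ is scale-free it is also blind to the normalizing sequence: for $X=\mathbb{R}$, $p=0$, $x_m=\varepsilon_m$, $y_m=-\varepsilon_m$ with $\varepsilon_m=o(r_m)$ one gets $F_d(x_m,y_m)=\frac{2\varepsilon_m\cdot\varepsilon_m}{\varepsilon_m^2}=2$ although both classes collapse onto $\pi(\tilde p)$. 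Hence ``the lower bound on $\Phi_d$ rules out pretangent degeneracy'' is simply not true, and your necessity argument as written does not deliver a nondegenerate triangle in $\Omega^X_{p,\tilde r}$. It can be repaired, but only by using the second factor: if the pair realizing $\Phi_d\geqslant c_1$ lives at scale $o(r_m)$, the two distances from that pair to the third point are both $r_m(1+o(1))$, which forces $d_1/d_2\to 1$ and contradicts the lower bound on $d_1/d_2-1$; therefore that pair lives at scale comparable to $r_m$ and $d_2/r_m$ stays bounded below.

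The converse failure of your equivalence breaks the sufficiency case split described in your last paragraph: $\Phi_d$ stays bounded below as soon as \emph{one} pair consists of two distinct classes both off $\pi(\tilde p)$; it does not require all three classes to be distinct from $\pi(\tilde p)$. In the subcase where exactly one vertex, say $\gamma$, collapses onto $\pi(\tilde p)$ while $\alpha\neq\beta$ are both off $\pi(\tilde p)$, the ultrametric inequality does \emph{not} ``follow already from the triangle inequality for $\rho$'': in $\mathbb{R}$ with $\alpha,\beta$ at normalized distances $1$ and $3$ from $\pi(\tilde p)$ on the same side, the triple has sides $2,1,3$ --- a perfectly legitimate non-ultrametric triangle. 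What actually saves this subcase is that $F_d$ of the pair $(\alpha,\beta)$ still tends to $\rho(\alpha,\beta)\,(a\wedge b)/(a\vee b)^2>0$, so hypothesis \eqref{eq3.21} still bites. The correct organizing observation, which should replace your first paragraph, is: whenever the ultrametric inequality fails for a triple (so that $d_1^{\rho}>d_2^{\rho}>0$), at least one pair consists of distinct classes both off $\pi(\tilde p)$, and hence $\Phi_d$ is eventually bounded below along the representing sequences. With that substitution, and the repair above in the necessity direction, the proof closes; as written, both directions contain a step that fails on concrete examples.
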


Using Theorem \ref{t3.10} and Lemma \ref{l3.13} we get
\begin{corollary}\label{c3.14}
Let $X\neq\varnothing$, $p\in X$ and let
$\tau:\mathcal{F}(X)\to\mathbb{R}$ be an extended by Balk metric.
The following statements are equivalent.
\begin{itemize}
\item[$(i)$] All extended metrics
$\mathcal{X}_{\tau}:\mathcal{F}(\Omega^X_{p,\tilde{r}})\to\mathbb{R}$
are generated by ultrametrics.

\item[$(ii)$] The extended metric $\tau:\mathcal{F}(X)\to\mathbb{R}$ is generated by the metric
$\tau^2$ at the point $p$ and the equality
$$
\underset{x,y,z\to p}{\lim} \Phi_{\tau^2}(x,y,z)
\left(\frac{\tau^2_1(x,y,z)}{\tau^2_2(x,y,z)}-1\right)=0,
$$
holds with $\frac{\tau^2_1(x,y,z)}{\tau^2_2(x,y,z)}:=1$ for
$x=y=z=p$.
\end{itemize}
\end{corollary}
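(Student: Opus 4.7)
The plan is to unpack the statement ``all $\mathcal{X}_{\tau}$ are generated by ultrametrics'' into two independent requirements and then invoke Theorem \ref{t3.10} for one and Lemma \ref{l3.13} for the other. Specifically, observe first that if an extended by Balk metric $\sigma$ on a set $Y$ is generated by a metric $\mu$, i.e.\ $\sigma(A)=\diam_{\mu} A$ for every $A\in\mathcal{F}(Y)$, then in particular $\sigma^2(x,y)=\mu(x,y)$ for all $x,y\in Y$. Applied to $\sigma=\mathcal{X}_{\tau}$ on $\Omega^X_{p,\tilde{r}}$, and using Theorem \ref{t3.5} which asserts $\mathcal{X}_{\tau}^2=\rho$, this forces $\mu=\rho$. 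Hence statement $(i)$ is equivalent to the conjunction of $(a)$ ``$\mathcal{X}_{\tau}$ is generated by $\rho$ for every pretangent space and every nontrivial ultrafilter'' and $(b)$ ``every pretangent space $(\Omega^X_{p,\tilde{r}},\rho)$ is ultrametric''.

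Next, I would apply Theorem \ref{t3.10} directly to condition $(a)$: it yields that $(a)$ holds if and only if $\tau$ is generated by $\tau^2$ at the point $p$ (in the sense of Definition \ref{def3.8}). Then I would apply Lemma \ref{l3.13} with $d:=\tau^2$ to condition $(b)$: it yields that $(b)$ holds if and only if
\[
\underset{x,y,z\to p}{\lim}\Phi_{\tau^2}(x,y,z)\left(\frac{\tau^2_1(x,y,z)}{\tau^2_2(x,y,z)}-1\right)=0,
\]
with the convention $\tau^2_1/\tau^2_2:=1$ when $\tau^2_2(x,y,z)=0$, which by the definition of $\tau^2_2$ happens precisely when $x=y=z=p$. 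Combining the two equivalences gives $(i)\Leftrightarrow(ii)$.

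I expect no serious obstacle; the only point requiring a brief verification is the reduction $\mu=\rho$ described above and the observation that ``all pretangent spaces are ultrametric'' matches the hypothesis of Lemma \ref{l3.13} verbatim (so that lemma applies without modification). Everything else is a direct citation of the two preceding results.
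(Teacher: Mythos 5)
Your proof is correct and matches the paper's (implicit) argument: the paper gives no explicit proof of this corollary, merely asserting that it follows from Theorem \ref{t3.10} and Lemma \ref{l3.13}, and your decomposition of $(i)$ into ``each $\mathcal{X}_{\tau}$ is generated by $\rho$'' plus ``each $\rho$ is an ultrametric'' --- justified by the observation that a generating metric must coincide with $\mathcal{X}_{\tau}^{2}=\rho$ (Theorem \ref{t3.5}) --- is exactly the intended glue. The only quibble is that $\tau^{2}_{2}(x,y,z)=0$ occurs precisely when $x=y=z$, not only when $x=y=z=p$, so the convention of Lemma \ref{l3.13} is needed on a slightly larger set than the one you (and the corollary's own statement) mention; this is harmless and does not affect the argument.
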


\begin{remark}\label{r3.15}
An extended metric
$\mathcal{X}_{\tau}:\mathcal{F}(\Omega^X_{p,\tilde{r}})\to\mathbb{R}$
is generated by an ultrametric if and only if the inequality
$$
\mathcal{X}_{\tau}(A\cup B) \leqslant \mathcal{X}_{\tau}(A\cup
C)\vee \mathcal{X}_{\tau}(B\cup C)
$$
holds for all $A,B,C\in\mathcal{F}(\Omega^X_{p,\tilde{r}})$. (See
Theorem 2.1 in \cite{DDP}).
\end{remark}

{\bf O. Dovgoshey}

Institute of Applied Mathematics and Mechanics of NASU, R. Luxemburg
str. 74, Donetsk 83114, Ukraine

{\bf E-mail: } aleksdov@mail.ru

\bigskip

{\bf D. Dordovskyi}

Institute of Applied Mathematics and Mechanics of NASU, R. Luxemburg
str. 74, Donetsk 83114, Ukraine

{\bf E-mail: } dordovskydmitry@gmail.com


\begin{thebibliography}{99}
    \bibitem{DAK2} {\it  F. Abdullayev, O. Dovgoshey, M. K\"{u}\c{c}\"{u}kaslan.}
    {Metric spaces with unique pretangent spaces. Conditions of the uniqueness.} //
    Ann. Acad. Sci. Fenn. Math., {\bf 36}, (2011), 353--392.

    \bibitem{Ba} {\it P.I. Balk.} On an extension of the concept of metric (Russian) //
    Dokl. Math., {\bf 79}, (2009), N3, 394--396.

    \bibitem{Bil}{\it V.~V.~Bilet.} Geodesic spaces tangent to metric spaces //
    Ukr. Math. J., {\bf 64}, (2013), N9, 1448--1456.

    \bibitem{BD2}{\it V.~Bilet, O.~Dovgoshey.} Isometric embeddings
    of pretangent spaces in $E^n$ // Bull. Belg. Math. Soc. -- Simon
    Stevin, {\bf 20}, (2013), 91--110.

    \bibitem{3*} {\it B.~C.~Dhage.} Generalized metric space and
    mapping with fixed point // Bull. Calcutta Math. Soc., {\bf 84}, (1992), N4, 329--336.

    \bibitem{Dor}{\it Dmitrii V. Dordovski.} Metric tangent spaces to Euclidean
    spaces // J. Math. Sci. (N.Y.), {\bf 179}, (2011), N2, 229-244.

    \bibitem{DDP} {\it D.~Dordovskyi, O.~Dovgoshey, E.~Petrov.}
    Diameter and Diametrical Pairs of Points in Ultrametric Spaces
    // p-Adic Numbers, Ultrametric Anal. App., {\bf 3}, (2011),
     N4, 253--262.

    \bibitem{Dov}{\it O. Dovgoshey.} {Tangent spaces to metric
    spaces and to their subspaces} // Ukr. Math. Bull., {\bf 5}, (2008), N4, 470--487.

    \bibitem{DAK1} {\it O. Dovgoshey, F. Abdullayev, M. K\"{u}\c{c}\"{u}kaslan.}
    {Compactness and boundedness of tangent spaces to metric spaces.} //  Beitr\"{a}ge Algebra
    Geom., {\bf 51}, (2010), N~2, 547--576.

    \bibitem{DD} {\it A.~A.~Dovgoshey, D.~V.~Dordovskyi.} An
    ultrametricity condition for pretangent spaces // Math. Notes,
    {\bf 92}, (2012), N7, 43--50.

    \bibitem{DM1}{\it O. Dovgoshey, O. Martio.} {Tangent spaces to metric
    spaces}, Reports in Math., Helsinki Univ., {\bf 480}, (2008), 1--20.

    \bibitem{DM2}{\it O. Dovgoshey, O. Martio.} {Tangent spaces to general metric
    spaces}, Rev. Roum. Math. Pures. Appl. {\bf 56}, (2011), N2, 137-155.

    \bibitem{1*} {\it S. Gahler.} 2-metrische R\"{a}ume und ihre
    topologische Structur // Mathematische Nachrichten, {\bf 26}, (1963),
    115--148.

    \bibitem{2*} {\it S. Gahler.} Zur geometric 2-metriche raume //
    Revue Roum.  Math. Pures App., {\bf 40}, (1966), 664--669.

    \bibitem{Kelly} {\it J. L. Kelly.} General Topology, D.~Van
    Nostrand Company, 1965.

    \bibitem{KT} {\it P. Komj\'{a}th, V. Totik.} Problems and
    Theorems in Classical Set Theory, Springer, New York, 2006.


    \bibitem{4*} {\it Z. Mustafa.} A new structure for generalized
    metric spaces --- with applications to fixed point theory, Ph.D.
    thesis, The University of Newcastle, Callaghan, Australia, 2005.

    \bibitem{6*} {\it Z. Mustafa, O. Hamed, F. Awawdeh.} Some fixed
    point theorem for mapping on complete $G$-metric spaces //
    Fixed Point Theory App., {\bf 2008}, (2008),
    Article~ID~189870, 12p. 1.

    \bibitem{5*} {\it Z. Mustafa, B. Sims.} A new approach to
    generalized metric spaces // J. Nonlinear Convex
    Anal., {\bf 7}, (2006), N2, 289--297.


\end{thebibliography}
\end{document}